\theoremstyle{plain}
\newtheorem{theorem}{Theorem}[section]
\newtheorem{lem}[theorem]{Lemma}
\newtheorem{corol}[theorem]{Corollary}
\theoremstyle{definition} 
\newtheorem{rem}[theorem]{Remark}
\newtheorem{defn}[theorem]{Definition}
\newcommand{\xn}{X_{\rm \scriptscriptstyle N}}
\begin{document}

\author{ Pier Domenico Lamberti \thanks{Dipartimento di Tecnica e Gestione dei Sistemi Industriali, Universit\`a degli Studi di Padova, Stradella S. Nicola 3, 36100 Vicenza, Italy.},
 Paolo Luzzini \thanks{Dipartimento di Matematica `Tullio Levi-Civita', Universit\`a degli Studi di Padova, Via Trieste 63, 35121 Padova, Italy.}, 
 Michele Zaccaron \thanks{Institut Fresnel, 
Faculte des Sciences de Saint J\'er\^ome,
Avenue Escadrille Normandie-Niemen,
13397 Marseille, 
France}}

\title{Permittivity optimization for Maxwell's eigenvalues}
\maketitle

\abstract{We formulate an optimization problem for the dependence of  the eigenvalues of Maxwell's equations in a cavity upon variation of the electric permittivity and we prove a corresponding Maximum Principle.}  

\vspace{.4cm}
\noindent \textbf{Key words:} Maxwell's equations, cavities, eigenvalue problem, permittivity variations, optimization.

\vspace{.4cm}
\noindent \textbf{AMS subject classifications:} 35Q61, 35Q60, 35P15, 35Q93,  78M50


\section{Introduction}
 
Let $\Omega$ be a bounded Lipschitz domain of $\mathbb{R}^3$ and $\varepsilon$ be a $3 \times 3 $ real symmetric matrix-valued  function.  
We consider the electric eigenvalue problem for the $\mathit{curl}\mathit{curl}$ operator in $\Omega$ with permittivity 
$\varepsilon$, that is:
\begin{equation}\label{prob:eig}
\begin{cases}
\operatorname{curl}\operatorname{curl} E = \lambda \, \varepsilon E \qquad &\mbox{ in } \Omega,\\
\mathrm{div}\, \varepsilon E = 0 & \mbox{ in } \Omega,\\
\nu \times E = 0 \qquad &\mbox{ on } \partial \Omega,
\end{cases}
\end{equation}
where $\nu$ denotes the outer unit normal to the boundary of $\Omega$. 

The above problem is strictly related to Maxwell's equations. Indeed, if $\Omega$ represents 
an electromagnetic cavity, the time-harmonic Maxwell's equations are:
\begin{equation}\label{ME}
\operatorname{curl} E - i\omega \mu H = 0, \qquad \operatorname{curl} H + i\omega \varepsilon E = 0 \quad \mbox{ in } \Omega.
\end{equation}
The vector fields $E$ and $H$ are respectively the electric and magnetic parts of the electromagnetic field, while $\varepsilon$ and $\mu$ are the electric conductivity and magnetic permeability of the medium filling $\Omega$, and $\omega$ is the frequency. 
If one assumes that the magnetic permeability is constant and equal to one, by applying the $\operatorname{curl}$ operator to the first equation in \eqref{ME} and then using the second equation one obtains:
\[
\operatorname{curl}\operatorname{curl} E = \omega^2 \, \varepsilon E.
\]
Hence one recovers the first equation in \eqref{prob:eig}  by setting $\lambda = \omega^2$. 
The second equation in \eqref{prob:eig} derives, at least in the case $\omega$ is not zero, directly  from the fact that the vector field $\varepsilon E$ is a curl, as one can see from the second equation in \eqref{ME}, thus it is divergence-free. 
Finally, if the medium is a perfect conductor then the electric field must satisfy the boundary condition $\nu \times E = 0$, which is the third equation in \eqref{prob:eig}. For an exhaustive  introduction to the mathematical theory of Maxwell's equations we refer e.g. to the monograph \cite{kihe} by   Kirsch and Hettlich. 

We remark that, in addition to \cite{kihe}, many other authors investigated Maxwell's equations due to their importance both from the theoretical and applied point of view. We mention for example some established monographs  \cite{ces, DaLi90, GiRa86, monk,RoStYa12} as well as the recent papers \cite{ammari, bauer, fermar, CaLaMo17, LaSt20, LaZa23, Pa17, Yi12}.

Coming back to problem \eqref{prob:eig}, under suitable assumptions on $\Omega$ and $\varepsilon$, it is well-known that its spectrum  is discrete  and consists of   a divergent sequence of  $\varepsilon$-dependent, non-negative eigenvalues $\{\lambda_j[\varepsilon]\}_{j \in \mathbb{N}}$ of finite multiplicity (see e.g. \cite[Thm. 4.34]{kihe}):
\[
0\leq \lambda_1[\varepsilon] \leq \lambda_2[\varepsilon] \leq \cdots \leq \lambda_n[\varepsilon] \leq \cdots \nearrow +\infty,
\]
where we repeat each eigenvalue in accordance with its multiplicity.

In the present paper we are interested in the dependence of  $\lambda_n[\varepsilon]$ upon variation of $\varepsilon$ with reference  to an  optimization problem that we formulate as follows.  

We begin with observing that  problem \eqref{prob:eig} can be considered as the vectorial version of the scalar  eigenvalue problem for the Laplace operator 
\begin{equation}\label{massden}
\left\{
\begin{array}{ll}
-\Delta u = \mu\rho u \quad &\mbox{in}\  \Omega,\vspace{2pt} \\
u=0& \mbox{on}\  \partial\Omega,
\end{array}
\right.
\end{equation}
that  models a vibrating membrane with shape $\Omega$ and mass density  $\rho $.  Note that the total mass of  the membrane is given by $\int_{\Omega}\rho dx$.    The eigenvalues of \eqref{massden} are represented  by a diverging sequence 
$0< \mu_1[\rho] \leq \mu_2[\rho] \leq \cdots \leq \mu_n[\rho] \leq \cdots $ depending on $\rho$.   A classical problem in optimization theory is to study the extremum problems
\begin{equation}\label{extremummass}
\min_{\int_{\Omega}\rho \, dx={\rm const}}\mu_n[\rho],\ \ {\rm and}\ \   \max_{\int_{\Omega}\rho \, dx={\rm const}}\mu_n[\rho]
\end{equation}
where one looks for minima and maxima of the eigenvalues of \eqref{massden} upon variation of $\rho$ subject to the constraint that  the total mass  is fixed.  We refer to   Cox \cite{Co94}, Cox and McLaughlin \cite{CoMc90A, CoMc90B}, Krein \cite{Kr55}, Lamberti \cite{La09}  and  Lamberti and Provenzano \cite{LaPr13}  for several results regarding problem \eqref{extremummass} and its variants.

As far as problem  \eqref{prob:eig} is concerned, we note that the energy $\mathbf{En}(E)$  of 
the electric field $E$  is given by 
 $\mathbf{En}(E)=\frac{1}{2}\int_\Omega\varepsilon E \cdot E \,dx$.  It is obvious that
 $$
\mathbf{En}(E) \le \frac{\sup^2_{x\in\Omega} |E(x)| }{2}\int_{\Omega} \varepsilon \frac{E}{|E|} \cdot \frac{E}{|E|}  dx  \le \frac{\sup^2_{x\in\Omega} |E(x)| }{2}\int_{\Omega}| \varepsilon |_{\mathcal{F}}dx 
$$
where 
$
\frac{1}{2}\int_{\Omega}| \varepsilon |_{\mathcal{F}}dx 
$
can be seen as  an upper bound for the energy of the normalised electric field $E/|E|$.  Here $|\varepsilon |_{\mathcal{F}}=(\sum_{i,j=1}^3\varepsilon_{i,j}^2)^{1/2}$ is the Frobenius norm of the matrix $\varepsilon$.  Thus, from this point of view, it appears that  a   natural vectorial  counterpart of problem \eqref{extremummass} could be the
problem 
\begin{equation}\label{extremumenergy}
\min_{\int_{\Omega}|\varepsilon|_{\mathcal{F}}  dx={\rm const}}\lambda_n[\varepsilon],\ \ {\rm and}\ \   \max_{\int_{\Omega}|\varepsilon|_{\mathcal{F}} dx={\rm const}}\lambda_n[\varepsilon].
\end{equation}

The main aim of the present paper is to further motivate problem  \eqref{extremumenergy} by showing that  a number of results  related to 
\eqref{extremummass} can be extended in a natural way to \eqref{extremumenergy}.  In particular, we follow the approach of 
\cite{La09}, \cite{LaPr13}  and we prove a `Maximum Principle' for simple eigenvalues and the elementary symmetric functions  of multiple eigenvalues 
$\lambda_n[\varepsilon]$.   Namely, problem \eqref{extremumenergy} is recasted in  the more natural form

\begin{equation}\label{extremumenergysym}
\min_{\int_{\Omega}|\varepsilon|  dx={\rm const}}\Lambda_{F,s}[\varepsilon],\ \ {\rm and}\ \   \max_{\int_{\Omega}|\varepsilon |dx={\rm const}}\Lambda_{F,s}[\varepsilon]
\end{equation}
where 
\[
\Lambda_{F,s}[\varepsilon] := \sum_{\substack{j_1,\dots,j_s \in F \\ j_1<\dots<j_s}} \lambda_{j_1}[\varepsilon] \cdots \lambda_{j_s}[\varepsilon]
\]
are the elementary symmetric functions of the  the eigenvalues with index in a fixed finite set $F$ and $s=1,\dots , |F|$.

 In order to prove our Maximum Principle, first we prove that any eigenvalue $\lambda_n[\varepsilon]$ is weakly* continuous with respect to $\varepsilon\in W^{1,\infty}(\Omega)^{3 \times 3}$,  then we prove that problem \eqref{extremumenergysym} has no local extrema.  Then we deduce that, for any weakly* compact subset $C$ of $W^{1,\infty}{(\Omega)^{3 \times 3}}$ of admissible permittivities, problem \eqref{extremumenergysym}   is solvable for $\varepsilon \in C$ and the corresponding points   of maxima and minima  cannot be in the interior of $C$. 

Incidentally, we point out  the reason why we consider the symmetric functions of multiple  eigenvalues rather than the eigenvalues themselves. This  is motivated  by the fact that the former have turned out to be natural objects in spectral optimization for elliptic problems, in particular because  they depend real-analytically on $\varepsilon$ (as proved in \cite{LuZa22})  and the formulas for their differentials are invariant with respect to the choice of an orthonormal basis of eigenvectors 
(see \cite{Lala06,Za23} for the case of the optimization of domain dependent problems). 
Note that the eigenvalues themselves are not even differentiable with  respect to $\varepsilon$  unless they are simple.

This paper is organized as follows. Section~\ref{sec:pre} is devoted to preliminaries and notation,  Section~\ref{sec:weak} to the weak* continuity of the eigenvalues, Section~\ref{sec:permop} to the optimization problem and the corresponding Maximum Principle. In the appendix we give a detailed proof of the Auchmuty's Principle for Maxwell's eigenvalues used in Section \ref{sec:weak}.

\section{Some preliminaries}\label{sec:pre}

Since we deal with self-adjoint operators, we consider only spaces of real-valued functions. In this sense, we denote by $L^2(\Omega)^3= L^2(\Omega;\mathbb{R}^3)$ the space of real square integrable vector fields, endowed with the natural inner product 
\[
\int_{\Omega} u \cdot v \,dx  = \int_{\Omega} (u_1v_1+u_2v_2+u_3v_3) \,dx
\]
for all $u = (u_1,u_2,u_3)$, $v= (v_1,v_2,v_3)$ in $ L^2(\Omega)^3$.
We also introduce the spaces  $L^\infty(\Omega)^{3 \times 3}$ and  $W^{1,\infty}(\Omega)^{3 \times 3}$ of real matrix-valued functions $M=\left( M_{ij} \right)_{1 \leq i,j \leq 3}:\Omega \to   \mathbb{R}^{3 \times 3}$ 
whose components are  in $L^\infty(\Omega)$ and $W^{1,\infty}(\Omega)$, respectively.
We endow these spaces   with the following norms
\begin{equation*}
\norm{M}_{L^\infty(\Omega)^{3 \times 3}} := \max_{1 \leq i,j \leq 3} \norm{M_{ij}}_{L^\infty(\Omega)}
\end{equation*}
and 
\begin{equation*}
\norm{M}_{W^{1,\infty}(\Omega)^{3 \times 3}} := \max_{1 \leq i,j \leq 3} \norm{M_{ij}}_{W^{1,\infty}(\Omega)}.
\end{equation*}
Letting $M =\left( M_{ij} \right)_{1 \leq i,j \leq 3}\in L^\infty(\Omega)^{3 \times 3}$, the following inequalities are easy to verify:
\begin{equation*}
\abs{M \xi} \leq 3 \norm{M}_{L^\infty(\Omega)^{3 \times 3}} |\xi|, \qquad \abs{M \xi \cdot \xi} \leq 3 \norm{M}_{L^\infty(\Omega)^{3 \times 3}} \abs{\xi}^2,
\end{equation*}
for all $\xi \in \mathbb{R}^3$ and a.e. in $\Omega$.   They will be used throughout the paper.

Before proceeding any further, we need to specify the assumptions on the parameters involved in problem \eqref{prob:eig}.
The following set defines the class of admissible permittivities $\varepsilon$ under consideration:
\begin{align*}
\mathcal{E}:= \Big\{\varepsilon \in\, & W^{1,\infty} \left(\Omega\right)^{3 \times 3} \cap  \mathrm{Sym}_3 (\Omega) : \\
&\exists \, c>0 \text{ s.t. }  \varepsilon(x) \, \xi \cdot \xi \geq c \, \abs{\xi}^2 \text{ for a.a. } x \in  \Omega, \text{ for all }\xi \in \mathbb{R}^3\Big\},
\end{align*}
where $\mathrm{Sym}_3 (\Omega)$ denotes the set of $(3 \times 3)$-symmetric matrix valued functions in $\Omega$.
We regard $W^{1,\infty} \left(\Omega\right)^{3 \times 3} \cap  \mathrm{Sym}_3 (\Omega)$ as a Banach subspace of  $W^{1,\infty} \left(\Omega\right)^{3 \times 3}$.
Observe that the set $\mathcal{E}$ is  open in  $W^{1,\infty}(\Omega)^{3 \times 3}\cap  \,\, \mathrm{Sym}_3 (\Omega)$ (see \cite[\S 2]{LuZa22}). 
In order to prove a  weak* continuity result (cf. Theorem \ref{weak*:cont:thm}), it will be necessary to introduce some type of compactness. To do this, we will further restrict the class of  permittivities to have some boundedness of the norms. Therefore, we introduce the following set. Let $\alpha,\beta,\gamma>0$   such that $0 < \alpha < \beta$. Then
\begin{equation} \label{def:Aabc}
\begin{split}
\mathcal{A}_{\alpha,\beta,\gamma} := \bigg\{\varepsilon & \in W^{1,\infty}(\Omega)^{3\times 3} \cap \mathrm{Sym}_3(\mathbb{R}) : \alpha |\xi|^2  \leq \varepsilon (x) \xi \cdot \xi \leq \beta |\xi|^2 \text{ for a.e. }x \in \Omega, \\
& \text{ for all } \xi \in \mathbb{R}^3, \ \norm{\nabla \varepsilon_{ij}}_{L^\infty(\Omega)^3} \leq \gamma \text{ for all } 1\leq i,j \leq 3\bigg\}.
\end{split}
\end{equation}
\begin{rem} 
It is immediate to realize that $\mathcal{A}_{\alpha,\beta,\gamma} \subset \mathcal{E}$. Also note that $\mathcal{A}_{\alpha,\beta,\gamma}$ is   a bounded subset of $L^{\infty}(\Omega)^{3\times 3}$, and therefore also of  $W^{1,\infty}(\Omega)^{3\times 3}$. Indeed one can prove the boundedness of the diagonal terms of $\varepsilon$ by taking as $\xi$ in the definition \eqref{def:Aabc} the canonical basis $\{e_i\}_{i=1,2,3}$ of $\mathbb{R}^3$. Then, the boundedness of the off-diagonal terms follows by using $\xi = e_i+e_j$, $i\neq j$, and  the symmetry of $\varepsilon$.
\end{rem}
As for  the set $\Omega$, for the moment we  just consider it a bounded Lipschitz domain of $\mathbb{R}^3$. Further down, after we will introduce the Gaffney inequality (cf. \eqref{ineq:GF}), we will add more requirements on its regularity.

Next we introduce the natural functional framework to set  problem \eqref{prob:eig}.
Let $\varepsilon \in \mathcal{E}$. We denote by 
$L^2_\varepsilon(\Omega)$ 
 the space $L^2(\Omega)^3$ endowed with the inner product
\begin{equation*}
\langle u, v \rangle_\varepsilon := \int_\Omega \varepsilon u \cdot v \, dx \qquad \forall u,v \in L^2(\Omega)^3.
\end{equation*}
It is easy to see that the above inner product induces a norm equivalent to the standard $L^2$-norm.

The space of square integrable vector fields with square integrable (weak) curl is denoted by
\begin{equation*}
H(\mathrm{curl}, \Omega) = \{ u \in L^2(\Omega)^3 : \operatorname{curl}u \in L^2(\Omega)^3\}.
\end{equation*}
Endowed with  the natural inner product
\begin{equation*}
\langle u,v\rangle_{H(\operatorname{curl}, \Omega)} := \int_\Omega \varepsilon u \cdot v \, dx + \int_\Omega \operatorname{curl}u \cdot \operatorname{curl}v \, dx \qquad \forall\, u,v \in H(\operatorname{curl},\Omega)
\end{equation*}
it becomes a Hilbert space.
Recall that $u\in L^2(\Omega)^3$ has a weak (or distributional) curl if  there exists a vector field $\operatorname{curl}u \in  L^2(\Omega)^3$ such that 
\begin{equation} \label{defweakcurl}
\int_\Omega u \cdot \operatorname{curl}\varphi \, dx = \int_\Omega \operatorname{curl}u \cdot \varphi \, dx \quad \forall \varphi \in C^\infty_c(\Omega)^3.
\end{equation}
By $H_0(\mathrm{curl}, \Omega)$ we denote  the closure of smooth compactly supported vector fields   in $\Omega$ in the norm  of $H(\mathrm{curl}, \Omega)$. It is precisely the space composed of those vector fields $u \in H(\operatorname{curl},\Omega)$ for which \eqref{defweakcurl} holds for any $\varphi \in C^\infty(\Omega)^3$.
Moreover, it can be characterized in the following way (cf. \cite[Thm. 2.12]{GiRa86}): 
\begin{equation*}
H_0(\operatorname{curl},\Omega) = \set{u \in H(\operatorname{curl},\Omega) :  \nu \times u\rvert_{\partial \Omega}=0},
\end{equation*}
that is, it is the space of vector fields in $H(\operatorname{curl},\Omega)$ whose \emph{tangential trace} is zero.
In the case $u$ is regular enough, its tangential trace is precisely the cross product between the  outer unit normal $\nu$ and the restriction of $u$ to $\partial \Omega$, while in general it is just an element of the dual space $H^{-1/2}(\partial \Omega)^3$ (denoted by $\gamma_\tau u$). For the sake of simplicity, we will always use the former notation also when referring to the general case. 
For more details we refer to  \cite[Ch. 2]{GiRa86} or \cite[Ch. IX-A \S 1.2]{DaLi90}.
Observe that the space $H_0(\operatorname{curl},\Omega)$ naturally encodes the boundary condition of problem \eqref{prob:eig}.

Analogously, we define the following Hilbert space:
\begin{equation*}
H(\operatorname{div} \varepsilon, \Omega)=\{ u  \in L^2(\Omega)^3 : \operatorname{div}\varepsilon u \in L^2(\Omega)\}
\end{equation*} 
endowed  with the  inner product
\begin{equation*}
\langle u,v\rangle_{H(\operatorname{div} \varepsilon, \Omega)} := \int_\Omega \varepsilon u \cdot v \, dx + \int_\Omega \operatorname{div}(\varepsilon u)  \operatorname{div}(\varepsilon v) \, dx \qquad  \forall\, u,v \in H(\operatorname{div}\varepsilon,\Omega).
\end{equation*}
We say that $u \in L^2(\Omega)^3$ has a weak (or distributional) $\varepsilon$-divergence if there exists a function $\operatorname{div}(\varepsilon u) \in L^2(\Omega)$ such that 
\begin{equation*}
\int_\Omega \varepsilon u \cdot \nabla \varphi \, dx = - \int_\Omega \operatorname{div}(\varepsilon u) \, \varphi \, dx \quad \forall \varphi \in C^\infty_c(\Omega).
\end{equation*}
If a vector field $u$ belongs to  $H^1(\Omega)^3$ and $\varepsilon \in \mathcal{E}$ then by standard rules of calculus, one has that
\begin{equation} \label{div:matrixxvector}
\operatorname{div}(\varepsilon u) = \operatorname{tr}(\varepsilon Du) + \operatorname{div}\varepsilon \cdot u
\end{equation}
a.e. in $\Omega$, where $Du$ denotes the Jacobian of $u$. Here, if $M$ is a matrix then $\operatorname{tr}(M)$ denotes the trace of a matrix $M$. Moreover, if $M \in W^{1,\infty}(\Omega)^{  3\times 3}$ then $\operatorname{div}M$ is the vector field defined by
\begin{equation*}
\operatorname{div}M := \left( \operatorname{div}M^{(1)}, \operatorname{div}M^{(2)}, \operatorname{div}M^{(3)} \right)
\end{equation*}
with $M^{(k)}$ denoting the $k$-th column of  $M$.

Furthermore, we introduce the space
\[
\xn^\varepsilon(\Omega) := H_0(\mathrm{curl}, \Omega) \cap H(\mathrm{div}\,\varepsilon, \Omega)
\]
equipped with inner product
\begin{equation*}
\langle u,v\rangle_{\xn^\varepsilon(\Omega)} := \int_\Omega \varepsilon u \cdot v \, dx  + \int_\Omega \operatorname{curl}u \cdot \operatorname{curl}v
 \, dx + \int_\Omega \operatorname{div} (\varepsilon u) \, \operatorname{div}(\varepsilon v) \, dx
\end{equation*}
for all $u,v \in \xn^\varepsilon(\Omega)$.
Its norm is 
\begin{equation*}
\|u\|^2_{\xn^\varepsilon (\Omega)}:=\langle \varepsilon u, u\rangle_{L^2(\Omega)^3}+ \norm{\operatorname{curl}u}^2_{L^2(\Omega)^3} + \|\operatorname{div}\varepsilon u\|^2_{L^2(\Omega)}.
\end{equation*}
Finally, we set 
\begin{equation*}
\begin{split}
\xn^\varepsilon(\mathrm{div}\,\varepsilon\, 0,\Omega) &:= \set{u \in \xn^\varepsilon(\Omega) : \mathrm{div} \, (\varepsilon u) = 0} \\
&= \set{u \in L^2(\Omega)^3 : \operatorname{curl}u \in L^2(\Omega)^3, \operatorname{div}(\varepsilon u)=0, \nu \times u\rvert_{\partial \Omega}=0}.
\end{split}
\end{equation*}

Under suitable assumptions on $\Omega$, we have the validity of the \emph{Gaffney inequality} (or Gaffney-Friedrichs inequality), which reads as follows: there exists a constant $C>0$ (in general depending on $\varepsilon \in \mathcal{E}$) such that
\begin{equation}\label{ineq:GF}
\norm{u}_{H^1(\Omega)^3} \leq C \|u\|_{\xn^\varepsilon (\Omega)} \quad \forall u \in \xn^\varepsilon (\Omega).
\end{equation}
As a consequence, the space $\xn^\varepsilon(\Omega)$ is continuously embedded into $H^1(\Omega)^3$.
We refer to Prokhorov and Filonov \cite[Thm. 1.1]{PrFi15} for a proof of \eqref{ineq:GF}.  Their proof includes 
the case of convex domains or in general Lipschitz domains satisfying the exterior ball condition, and they require less regular permittivities than the ones in $\mathcal{E}$.
 Another  proof can be found in Alberti and Capdeboscq \cite{AlCa14}. Classical references for the Gaffney inequality are   Saranen \cite{Sa83} and Mitrea \cite{Mit01}. More recently, Creo and Lancia \cite{CrLa20} proved a weaker form of the Gaffney inequality in more irregular domains in dimension $2$ and $3$.

We now formulate  the assumptions on the set $\Omega$ that  we consider  in some of the main results of the paper. Given $\alpha,\beta,\gamma>0$  with $0 < \alpha < \beta$, then    
\begin{eqnarray}\label{Omega_def}
& 
\text{$\Omega$ is a bounded Lipschitz domain of $\mathbb{R}^3$ such that  the Gaffney inequality \eqref{ineq:GF}}\nonumber \\ 
& \text{ holds  with a   constant $C>0$ independent of $\varepsilon\in \mathcal{A}_{\alpha,\beta,\gamma}$.  } 
\end{eqnarray}   
\begin{rem}
If the bounded Lipschitz domain $\Omega$ is of class $C^{1,1}$ or convex,  the requirement in \eqref{Omega_def} is automatically satisfied. Indeed by looking at the proof of the Gaffney inequality (see e.g. \cite{PrFi15}), one can realize that that under either one of these assumptions on $\Omega$, if $\varepsilon \in   \mathcal{A}_{\alpha,\beta,\gamma}$ then the constant $C$ in \eqref{ineq:GF} depends only on $\alpha, \beta $ and on  the $L^\infty$-norm of the derivatives of $\varepsilon$.
\end{rem}

We now turn our attention to the eigenvalue problem \eqref{prob:eig}.
The weak formulation of problem \eqref{prob:eig}, by standard integration by parts (see e.g. Kirsch and Hettllich \cite[Thm. A.13]{kihe}, is the following:
\begin{equation}\label{prob:eigen1weak}
\int_{\Omega} \mathrm{curl}\,u \cdot \mathrm{curl}\, v \,dx  =\lambda\int_{\Omega}\varepsilon u\cdot v \,dx
  \qquad \forall v \in \xn^\varepsilon(\mathrm{div}\,\varepsilon \, 0,\Omega),
\end{equation}
in the unknowns $\lambda \in \mathbb{R}$ (the eigenvalues) and $u \in \xn^\varepsilon(\mathrm{div}\,\varepsilon \, 0,\Omega)$ (the eigenvectors).
Observe that the eigenvalues $\lambda$ of problem \eqref{prob:eigen1weak} are non-negative, as one can easily see by setting $v=u$ in  \eqref{prob:eigen1weak}.  

For our purposes it will be convenient  to work in the space $\xn^\varepsilon(\Omega)$ rather than $\xn^\varepsilon(\mathrm{div}\,\varepsilon\, 0,\Omega)$. Hence, following Costabel \cite{Co91} and Costabel and Dauge \cite{CoDa99}, we consider the following eigenvalue problem which presents  an additional penalty term
\begin{equation}\label{prob:eigen2weak}
\int_{\Omega} \mathrm{curl}\,u \cdot \mathrm{curl}\,v \,dx + \tau \int_{\Omega} \operatorname{div}(\varepsilon u )\, \operatorname{div}(\varepsilon v) \,dx = \sigma \int_{\Omega}\varepsilon u\cdot v \,dx \quad \forall\, v \in \xn^\varepsilon(\Omega),
\end{equation}
in the unknowns $u \in  \xn^\varepsilon(\Omega)$ and $\sigma \in \mathbb{R}$.
Here $\tau>0$ is any fixed positive real number.
Solutions of problem \eqref{prob:eigen1weak} will then corresponds to solutions $u$ of \eqref{prob:eigen2weak} with $\operatorname{div}(\varepsilon u)=0$ in $\Omega$ (see Theorem \ref{thm:codau} below). 
We define the following quadratic form on $\xn^\varepsilon(\Omega)$, equivalent to its inner product:
\begin{equation}\label{def:Te}
T_\varepsilon [u,v] :=   \int_\Omega \varepsilon \, u \cdot v \, dx +  \int_\Omega \operatorname{curl}u \cdot  \operatorname{curl}v \,dx + \tau \int_\Omega \operatorname{div}(\varepsilon u) \operatorname{div}(\varepsilon v)\, dx \qquad 
\forall\, u,v \in \xn^\varepsilon(\Omega).
\end{equation}
The eigenvalue problem \eqref{prob:eigen2weak} is then equivalent to
\begin{equation}\label{prob:eigen3weak}
 T_\varepsilon [u,v] = (\sigma+1)\langle u,v \rangle_\varepsilon \qquad \forall \,v  \in \xn^\varepsilon(\Omega).
 \end{equation}

If $\Omega$ is a bounded Lipschitz domain and $\varepsilon \in \mathcal{E}$, the space $\xn^\varepsilon(\Omega)$ is compactly embedded into $L^2(\Omega)^3$ (see also Weber \cite{We80}). Hence we deduce that the spectrum of problem \eqref{prob:eigen2weak} is composed by non-negative eigenvalues of finite multiplicity (depending on $\varepsilon$) which can be arranged in a non-decreasing divergent sequence
\[
0\leq \sigma_1[\varepsilon] \leq \sigma_2[\varepsilon] \leq \cdots \leq \sigma_n[\varepsilon] \leq \cdots \nearrow +\infty.
\]
Here each eigenvalue is repeated in accordance with its multiplicity. It is important to observe that the zero eigenspace has dimension equal to the number of connected components of $\partial  \Omega$ minus one (see e.g. Assous, Ciarlet and Labrounie \cite[Prop. 6.1.1]{AsCiLa18}). In particular, the multiplicity of the zero eigenvalue depends only on the topology of $\Omega$.  
By the spectral theorem, we also have a standard min-max variational characterization of each eigenvalue:
\begin{equation} \label{minmax:formula}
\sigma_j[\varepsilon] = \min_{\substack{V_j \subset \xn^\varepsilon(\Omega),\\ \operatorname{dim}V_j = j}} \max_{\substack{u \in V_j,\\u \neq 0}} \frac{\int_{\Omega} \abs{\operatorname{curl}u}^2 dx + \tau \int_\Omega \abs{\operatorname{div}(\varepsilon u)}^2 dx}{\int_{\Omega} \varepsilon u \cdot u \, dx}.
\end{equation}
In view of the Gaffney inequality, formula \eqref{minmax:formula} can be rewritten as follows:
\begin{equation} \label{minmax:formula:H1}
\sigma_j[\varepsilon] = \min_{\substack{V_j \subset H^1_{\rm \scriptscriptstyle N}(\Omega),\\ \operatorname{dim}V_j = j}} \max_{\substack{u \in V_j,\\u \neq 0}} \frac{\int_{\Omega} \abs{\operatorname{curl}u}^2 dx +\tau \int_\Omega \abs{\operatorname{div}(\varepsilon u)}^2 dx}{\int_{\Omega} \varepsilon u \cdot u \, dx},
\end{equation}
where
\begin{equation*}
H^1_{\rm \scriptscriptstyle N}(\Omega) : = \set{u \in H^1(\Omega)^3 :  \nu \times u\rvert_{\partial \Omega}=0}.
\end{equation*}
Additionally, it is possibile to characterize the eigenvalues of problem \eqref{prob:eigen2weak} by means of  two families of eigenvalues: one corresponding to the original problem 
\eqref{prob:eigen1weak} (or \eqref{prob:eig}), and one associated with the operator $-\operatorname{div}(\varepsilon \nabla \cdot)$ with Dirichlet boundary conditions. More precisely, we recall the following result in the same spirit of  Costabel and Dauge \cite[Thm 1.1]{CoDa99}
(see also \cite[Thm. 2.2]{LuZa22} for a proof). 
\begin{theorem} \label{thm:codau}   
Let $\Omega$ be a bounded Lipschitz domain and  $\varepsilon \in \mathcal{E}$ be such that \eqref{ineq:GF} holds.
Then the eigenpairs $(\sigma, u) \in \mathbb{R} \times \xn^\varepsilon(\Omega)$ of problem 
\eqref{prob:eigen2weak} are given by the following two disjoint families:
\begin{enumerate}[label=\normalfont\roman*)]
\item the pairs $( \lambda,  u)   \in \mathbb{R} \times \xn^\varepsilon(\mathrm{div}\,\varepsilon \, 0,\Omega)$ solutions of  problem   \eqref{prob:eigen1weak};
\item the pairs $(\tau \rho,\nabla f)$ where $(\rho,f) \in \mathbb{R} \times  H^1_0(\Omega)$ is an eigenpair of the problem
\begin{equation} \label{dirichlet:problem:simillap}
\begin{cases}
-\operatorname{div}(\varepsilon \nabla f)=\rho f & \text{in }\Omega,\\
f=0 & \text{on }\partial \Omega.
\end{cases}
\end{equation}
\end{enumerate}
In particular, the set of eigenvalues of problem  \eqref{prob:eigen2weak} are given by the union of the set of eigenvalues of problem \eqref{prob:eigen1weak} and the set of eigenvalues of the operator 
$-\operatorname{div}(\varepsilon\nabla\cdot)$ with Dirichlet boundary conditions  in $\Omega$ multiplied by  $\tau$.
\end{theorem}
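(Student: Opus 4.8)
The plan is to diagonalise the quadratic form $T_\varepsilon$ of \eqref{def:Te} with respect to a Helmholtz-type splitting of $\xn^\varepsilon(\Omega)$ that is orthogonal for the inner product $\langle\cdot,\cdot\rangle_\varepsilon$. Introduce
\[
G_\varepsilon:=\bigl\{\nabla f\ :\ f\in H^1_0(\Omega),\ \operatorname{div}(\varepsilon\nabla f)\in L^2(\Omega)\bigr\}=\nabla H^1_0(\Omega)\cap\xn^\varepsilon(\Omega).
\]
The first step is to prove the orthogonal decomposition $\xn^\varepsilon(\Omega)=\xn^\varepsilon(\mathrm{div}\,\varepsilon\,0,\Omega)\oplus G_\varepsilon$. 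Given $u\in\xn^\varepsilon(\Omega)$, the bilinear form $(f,g)\mapsto\int_\Omega\varepsilon\nabla f\cdot\nabla g\,dx$ is coercive on $H^1_0(\Omega)$ (by the ellipticity bound in the definition of $\mathcal E$ and the Poincar\'e inequality), so Lax--Milgram yields a unique $f\in H^1_0(\Omega)$ with $\int_\Omega\varepsilon\nabla f\cdot\nabla g\,dx=\int_\Omega\varepsilon u\cdot\nabla g\,dx$ for all $g\in H^1_0(\Omega)$. Setting $u_0:=u-\nabla f$ one has $\langle u_0,\nabla g\rangle_\varepsilon=0$ for all $g\in H^1_0(\Omega)$, hence in particular $\operatorname{div}(\varepsilon u_0)=0$; moreover $\operatorname{curl}u_0=\operatorname{curl}u\in L^2(\Omega)^3$ and $\nu\times u_0\rvert_{\partial\Omega}=\nu\times u\rvert_{\partial\Omega}=0$ because $\nabla f\in H_0(\operatorname{curl},\Omega)$ whenever $f\in H^1_0(\Omega)$. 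Thus $u_0\in\xn^\varepsilon(\mathrm{div}\,\varepsilon\,0,\Omega)$ and $\nabla f=u-u_0\in\xn^\varepsilon(\Omega)$, so $\operatorname{div}(\varepsilon\nabla f)\in L^2(\Omega)$ and $\nabla f\in G_\varepsilon$. The two summands intersect trivially: if $\nabla f\in G_\varepsilon$ and $\operatorname{div}(\varepsilon\nabla f)=0$, testing with $f\in H^1_0(\Omega)$ and using ellipticity gives $\nabla f=0$.

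The second step is to observe that $T_\varepsilon$ is block-diagonal for this splitting: for $u_0\in\xn^\varepsilon(\mathrm{div}\,\varepsilon\,0,\Omega)$ and $\nabla g\in G_\varepsilon$ one has $\int_\Omega\operatorname{curl}u_0\cdot\operatorname{curl}\nabla g\,dx=0$ (since $\operatorname{curl}\nabla g=0$), $\int_\Omega\operatorname{div}(\varepsilon u_0)\operatorname{div}(\varepsilon\nabla g)\,dx=0$ (since $\operatorname{div}(\varepsilon u_0)=0$), and the mixed term $\langle u_0,\nabla g\rangle_\varepsilon=0$ by Step~1. Hence, writing an eigenfunction of \eqref{prob:eigen2weak} (equivalently \eqref{prob:eigen3weak}) as $u=u_0+\nabla f$ and testing separately against arbitrary $v\in\xn^\varepsilon(\mathrm{div}\,\varepsilon\,0,\Omega)$ and against $v=\nabla g\in G_\varepsilon$, the problem decouples: if $u_0\neq0$ then $(\sigma,u_0)$ satisfies $\int_\Omega\operatorname{curl}u_0\cdot\operatorname{curl}v\,dx=\sigma\langle u_0,v\rangle_\varepsilon$ for all $v\in\xn^\varepsilon(\mathrm{div}\,\varepsilon\,0,\Omega)$, which is exactly \eqref{prob:eigen1weak} (the penalty term is identically zero on this space) --- family i); and if $\nabla f\neq0$ then $(\sigma,\nabla f)$ solves the reduced problem $\tau\int_\Omega\operatorname{div}(\varepsilon\nabla f)\operatorname{div}(\varepsilon\nabla g)\,dx=\sigma\int_\Omega\varepsilon\nabla f\cdot\nabla g\,dx$ for all $\nabla g\in G_\varepsilon$. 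Conversely, every solution of \eqref{prob:eigen1weak} and every solution of the reduced problem solves \eqref{prob:eigen2weak}, by the same block-diagonal computation. Thus every eigenspace of \eqref{prob:eigen2weak} splits $\langle\cdot,\cdot\rangle_\varepsilon$-orthogonally into an eigenspace of \eqref{prob:eigen1weak} and a space of gradients, and the eigenvalues of \eqref{prob:eigen2weak} are the union of the two families with multiplicities adding.

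The third step identifies the reduced problem on $G_\varepsilon$ with \eqref{dirichlet:problem:simillap}. If $(\rho,f)$ is an eigenpair of \eqref{dirichlet:problem:simillap} then $\operatorname{div}(\varepsilon\nabla f)=-\rho f\in L^2(\Omega)$, so $\nabla f\in G_\varepsilon$, and for every $v\in\xn^\varepsilon(\Omega)$ integration by parts (legitimate since $f\in H^1_0(\Omega)$ and $v\in H(\operatorname{div}\varepsilon,\Omega)$, by density of $C^\infty_c(\Omega)$, and using the symmetry of $\varepsilon$) gives $\tau\int_\Omega\operatorname{div}(\varepsilon\nabla f)\operatorname{div}(\varepsilon v)\,dx=-\tau\rho\int_\Omega f\operatorname{div}(\varepsilon v)\,dx=\tau\rho\int_\Omega\varepsilon\nabla f\cdot v\,dx$, i.e. $(\tau\rho,\nabla f)$ solves \eqref{prob:eigen2weak}. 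For the converse, let $\{f_k\}_k$ be a complete system of eigenfunctions of \eqref{dirichlet:problem:simillap}, $L^2$-orthonormal, with eigenvalues $\rho_k>0$ (the operator $-\operatorname{div}(\varepsilon\nabla\cdot)$ with Dirichlet conditions has compact resolvent on $H^1_0(\Omega)$ and is coercive); each $f_k$ lies in $G_\varepsilon$. If $f\in H^1_0(\Omega)$ with $\operatorname{div}(\varepsilon\nabla f)\in L^2(\Omega)$ solves the reduced problem and $c_l:=\int_\Omega f f_l\,dx$, then testing with $g=f_l$ and using $\operatorname{div}(\varepsilon\nabla f_l)=-\rho_l f_l$ together with $\int_\Omega\varepsilon\nabla f\cdot\nabla f_l\,dx=-\int_\Omega f\operatorname{div}(\varepsilon\nabla f_l)\,dx=\rho_l c_l$ yields $\tau\rho_l^2 c_l=\sigma\rho_l c_l$ for every $l$. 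Since $\rho_l>0$, this forces $c_l=0$ whenever $\tau\rho_l\neq\sigma$; hence $\sigma=\tau\rho$ for some Dirichlet eigenvalue $\rho$ and $f\in\operatorname{span}\{f_l:\tau\rho_l=\sigma\}$, i.e. $f$ is an eigenfunction of \eqref{dirichlet:problem:simillap} with eigenvalue $\rho$. This establishes the correspondence with family ii) and, combined with Step~2, the final assertion on the sets of eigenvalues.

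The point I expect to require the most care is Step~1, namely verifying that the Helmholtz splitting is $\langle\cdot,\cdot\rangle_\varepsilon$-orthogonal \emph{and} stays inside $\xn^\varepsilon(\Omega)$ --- in particular that the extracted gradient part $\nabla f$ inherits the homogeneous tangential trace and has an $L^2$ $\varepsilon$-divergence --- together with the clean bookkeeping of the integration-by-parts formulae linking $H_0(\operatorname{curl},\Omega)$, $H(\operatorname{div}\varepsilon,\Omega)$ and $H^1_0(\Omega)$. Note that the Gaffney inequality \eqref{ineq:GF} and the Lipschitz regularity of $\Omega$ enter only through the compact embedding $\xn^\varepsilon(\Omega)\hookrightarrow L^2(\Omega)^3$ that makes the spectra discrete, which has already been recorded; the decoupling itself is purely algebraic.
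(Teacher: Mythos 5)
Your argument is correct, and it is essentially the proof the paper relies on: the paper does not reprove Theorem \ref{thm:codau} but refers to \cite[Thm. 2.2]{LuZa22} (in the spirit of Costabel--Dauge \cite{CoDa99}), where the same $\langle\cdot,\cdot\rangle_\varepsilon$-orthogonal Helmholtz splitting of $\xn^\varepsilon(\Omega)$ into $\varepsilon$-divergence-free fields and gradients of $H^1_0$ potentials is used to block-diagonalize the penalized form. Your Steps 1--3 (Lax--Milgram projection, decoupling of $T_\varepsilon$, and identification of the gradient block with the Dirichlet problem \eqref{dirichlet:problem:simillap}) reproduce that route faithfully, including the key points that $\nabla H^1_0(\Omega)\subset H_0(\operatorname{curl},\Omega)$ and that the penalty term vanishes on $\xn^\varepsilon(\mathrm{div}\,\varepsilon\,0,\Omega)$.
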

In view of Theorem \ref{thm:codau}, we call {\it Maxwell eigenvalues} and {\it Maxwell eigenvectors} the eigenpairs corresponding to the first family in i). More precisely we give the following definition.
\begin{defn}
Let $\Omega$ be a bounded Lipschitz domain and  $\varepsilon \in \mathcal{E}$ be such that \eqref{ineq:GF} holds.
An eigenvalue $\sigma$ of problem \eqref{prob:eigen2weak} is said to be 
  a \emph{Maxwell eigenvalue}  with permittivity $\varepsilon$ if  there exists $u \in \xn^\varepsilon(\mathrm{div}\,\varepsilon \, 0,\Omega)$, $u \neq 0$, such that $(\sigma, u)  $ is an eigenpair of problem \eqref{prob:eigen1weak}.
  In this case, we say that $u$ is a \emph{Maxwell eigenvector}. We denote the set of Maxwell eigenvalues by:
 \[
0\leq \lambda_1[\varepsilon] \leq \lambda_2[\varepsilon] \leq \cdots \leq \lambda_n[\varepsilon] \leq \cdots \nearrow +\infty,
\]
where we repeat the eigenvalues in accordance with their (Maxwell) multiplicity,  i.e. the dimension of the space generated by the corresponding Maxwell eigenvectors. 
\end{defn}

 \section{Weak* continuity of  Maxwell eigenvalues}\label{sec:weak}

In \cite[Thm. 3.2]{LuZa22} it is  proved that the Maxwell eigenvalues are locally Lipschitz continuous with respect to 
$\varepsilon \in \mathcal{E}$, and thus in particular they are continuous. 
Here we show that the Maxwell eigenvalues $\lambda_j[\varepsilon]$ depend with continuity on $\varepsilon$ not only with respect to the strong topology of $W^{1,\infty}(\Omega)^{3 \times 3}$, but also with respect to its weak* topology. Note that the continuity in the weak* topology is more relevant in view of applications to optimization problems.

We now recall what we mean by the  weak* topology of $W^{1,\infty}(\Omega)$.
We consider $W^{1,\infty}(\Omega)$ as a subspace of $L^\infty(\Omega)^4$.
The inclusion is interpreted by identifying a function $f \in W^{1,\infty}(\Omega)$ as the quadruple 
\[
(f, \partial_{x_1} f, \partial_{x_2} f, \partial_{x_3} f) = (f, \nabla f) \in L^\infty(\Omega) \times L^\infty(\Omega)^3= L^\infty(\Omega)^4.
\] 
Then we endow $W^{1,\infty}(\Omega)$ with the topology induced by  the weak* topology  of $L^\infty(\Omega)^4$.
We call this topology the  weak*  topology of $W^{1,\infty}(\Omega)$.

Note that the subspace $W^{1,\infty}(\Omega)$ is sequentially weakly* closed in $L^\infty(\Omega)^4$.  Indeed it is not difficult to see that if $\{f_k\}_{k \in \mathbb{N}}
\subseteq W^{1,\infty}(\Omega)$ converges weakly* in $L^\infty(\Omega)^4$ to some quadruple $(f,F)\in L^\infty(\Omega)^4$, that is 
\begin{equation*} 
\int_\Omega f_k g \, dx + \int_\Omega \nabla f_k \cdot  G \, dx \to \int_\Omega f g \, dx + \int_\Omega F \cdot G \, dx \quad \mbox{ as } k \to +\infty
\end{equation*}
for all $(g, G) \in  L^1(\Omega)^4$, then $f \in W^{1,\infty}(\Omega)$ and $\nabla f = F$.
Sequential weak* closedness is sufficient for our purposes, since we will work with bounded sets in $W^{1,\infty}(\Omega)$ (see the definition of $\mathcal{A}_{\alpha,\beta,\gamma}$ in \eqref{def:Aabc}),  which are metrizable with respect to the weak* topology due to the fact that $L^1(\Omega)^4$ is separable (see e.g. Megginson \cite[Thm. 2.6.23]{meg}).
Hence in the sequel the notions of sequentially weakly* closed and weakly* closed coincide.
We summarize the above discussion in the following definition.

\begin{defn} \label{defn:weakstar:conv}
We say that a sequence of  functions $\{f_k\}_{k \in \mathbb{N}} \subseteq W^{1,\infty}(\Omega)$ weakly* converges to a function $f \in W^{1,\infty}(\Omega)$, if for all $(g, G) \in L^1(\Omega)^4$ one has
\begin{equation} \label{type:weakstar:conv}
\int_\Omega f_k g \, dx + \int_\Omega \nabla f_k \cdot G \, dx \to \int_\Omega f g \, dx + \int_\Omega \nabla f \cdot G \, dx \quad \mbox{ as } k \to +\infty. 
\end{equation}
 In this case we write  $f_k \rightharpoonup^* f$ in $W^{1,\infty}(\Omega)$.
 We say that a sequence of $(3 \times 3)$-matrix valued maps $\{M_k\}_{k \in \mathbb{N}} \subseteq W^{1,\infty}(\Omega)^{3 \times 3}$ weakly* converges to a map $M \in W^{1,\infty}(\Omega)^{3 \times 3}$ if \eqref{type:weakstar:conv} is satisfied for every $(i,j)$-component  of the matrix for all $i,j=1,2,3$. In this case we  write  $M_k \rightharpoonup^* M$ in $W^{1,\infty}(\Omega)^{3 \times 3}$.
 
\end{defn}

\begin{rem} \label{strLinf}
Observe that, by the uniform boundedness principle, if $\{f_k\}_{k \in \mathbb{N}} \subseteq W^{1,\infty}(\Omega)$ is such that $f_k \rightharpoonup^* f$ in $W^{1,\infty}(\Omega)$, then the sequence is bounded in $W^{1,\infty}(\Omega)$.
Since $\Omega \subset \mathbb{R}^3$ is bounded, then $W^{1,\infty}(\Omega)$ is contained in $W^{1,p}(\Omega)$ for any $p \geq 1$.  Under the assumption that $\Omega$ is a bounded Lipschitz domain, taking $p>3$ and using  the Rellich-Kondrachov embedding theorem (see, e.g.,  Adams and Fournier \cite[Thm. 6.3]{ad03}), we have that
we can also assume that, up to passing to a subsequence,  $f_k \to f$ strongly  in $L^\infty(\Omega)$.
\end{rem}

Let $\alpha,\beta,\gamma>0$   such that $0 < \alpha < \beta$. For the sake of the reader, we recall the definition of the restricted class of permittivities $\mathcal{A}_{\alpha,\beta,\gamma}$ (cf. \eqref{def:Aabc}):
\begin{equation*}
\begin{split}
\mathcal{A}_{\alpha,\beta,\gamma} := \bigg\{\varepsilon & \in W^{1,\infty}(\Omega)^{3\times 3} \cap \mathrm{Sym}_3(\mathbb{R}) : \alpha |\xi|^2  \leq \varepsilon (x) \xi \cdot \xi \leq \beta |\xi|^2 \text{ for a.e. }x \in \Omega, \\
& \text{ for all } \xi \in \mathbb{R}^3, \ \norm{\nabla \varepsilon_{ij}}_{L^\infty(\Omega)^3} \leq \gamma \text{ for all } 1\leq i,j \leq 3\bigg\}.
\end{split}
\end{equation*}
Before proceeding any further, we first recall that for a fixed permittivity $\varepsilon$
 all the eigenvalues $\{\sigma_j[\varepsilon]\}_{j \in \mathbb{N}}$ can be  uniformly bounded  from above by means of the eigenvalues
  $\{\sigma_j[\mathbb{I}_3]\}_{j \in \mathbb{N}}$, where $\sigma_j[\mathbb{I}_3]$ is the $j$-th eigenvalue of problem \eqref{prob:eigen2weak} with unitary permittivity. The proof exploits formula  \eqref{div:matrixxvector} together with the Gaffney inequality and can be found in last part of the proof of 
 \cite[Thm. 32]{LuZa22}.
\begin{lem} \label{bounds:eigenvalues}
 Let $\alpha,\beta,\gamma>0$  with $0 < \alpha < \beta$. Let $\Omega$ be as in \eqref{Omega_def}.    Then  there exists a constant $C>0$  such that
\[
\sigma_j[\varepsilon] \leq C (\sigma_j[\mathbb{I}_3]+1) \qquad \forall j \in \mathbb{N}, \,\forall \varepsilon \in \mathcal{A}_{\alpha,\beta,\gamma}.
\]
\end{lem}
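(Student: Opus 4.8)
The plan is to use the min-max characterization \eqref{minmax:formula:H1} and compare the Rayleigh quotient for a general $\varepsilon \in \mathcal{A}_{\alpha,\beta,\gamma}$ with the one for the identity matrix $\mathbb{I}_3$ on the \emph{same} test space, exploiting that the admissible space $H^1_{\rm \scriptscriptstyle N}(\Omega)$ does not depend on $\varepsilon$. First I would fix $j \in \mathbb{N}$ and a $j$-dimensional subspace $V_j \subset H^1_{\rm \scriptscriptstyle N}(\Omega)$, and for any $u \in V_j \setminus \{0\}$ estimate the numerator and denominator of
\[
\frac{\int_\Omega |\operatorname{curl}u|^2\,dx + \tau\int_\Omega |\operatorname{div}(\varepsilon u)|^2\,dx}{\int_\Omega \varepsilon u \cdot u\, dx}
\]
in terms of the corresponding quantities for $\mathbb{I}_3$. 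The denominator is immediate: the ellipticity bound $\alpha|\xi|^2 \le \varepsilon(x)\xi\cdot\xi$ gives $\int_\Omega \varepsilon u\cdot u\,dx \ge \alpha \int_\Omega |u|^2\,dx$. For the numerator, the curl term is unchanged, and the only real work is bounding $\int_\Omega |\operatorname{div}(\varepsilon u)|^2\,dx$.

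For that term I would use the product rule \eqref{div:matrixxvector}, namely $\operatorname{div}(\varepsilon u) = \operatorname{tr}(\varepsilon Du) + \operatorname{div}\varepsilon \cdot u$, valid since $u \in H^1(\Omega)^3$ and $\varepsilon \in \mathcal{E}$. Using $|\operatorname{tr}(\varepsilon Du)| \le 3\|\varepsilon\|_{L^\infty(\Omega)^{3\times 3}}|Du| \le 3\beta' |Du|$ (where $\beta'$ bounds the $L^\infty$-norm of $\varepsilon$, which is controlled by $\beta$ as noted in the Remark following \eqref{def:Aabc}) and $|\operatorname{div}\varepsilon \cdot u| \le 3\gamma |u|$ from the gradient bound in the definition of $\mathcal{A}_{\alpha,\beta,\gamma}$, one gets a pointwise estimate $|\operatorname{div}(\varepsilon u)|^2 \le C_1(|Du|^2 + |u|^2)$ with $C_1$ depending only on $\alpha,\beta,\gamma$. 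Integrating, $\int_\Omega |\operatorname{div}(\varepsilon u)|^2\,dx \le C_1 \|u\|^2_{H^1(\Omega)^3}$. Now I invoke the Gaffney inequality \eqref{ineq:GF}, with a constant independent of $\varepsilon \in \mathcal{A}_{\alpha,\beta,\gamma}$ by the standing assumption \eqref{Omega_def} on $\Omega$: $\|u\|^2_{H^1(\Omega)^3} \le C^2\|u\|^2_{\xn^\varepsilon(\Omega)} = C^2\big(\int_\Omega \varepsilon u\cdot u + \int_\Omega |\operatorname{curl}u|^2 + \int_\Omega |\operatorname{div}(\varepsilon u)|^2\big)$. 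This brings back the divergence term, so I would absorb it: if $C_1 C^2 < 1$ one absorbs directly, otherwise one must first use Gaffney applied to the pair $(\varepsilon, u)$ more carefully, or simply observe that \eqref{div:matrixxvector} combined with Gaffney already appears in \cite[Thm. 3.2]{LuZa22} to yield $\|u\|_{H^1(\Omega)^3} \le C'\big(\int_\Omega |\operatorname{curl}u|^2 + \int_\Omega |u|^2\big)^{1/2}$ with $C'$ uniform over $\mathcal{A}_{\alpha,\beta,\gamma}$; I would cite that bound directly rather than re-derive the absorption.

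Putting the pieces together: numerator $\le \int_\Omega |\operatorname{curl}u|^2\,dx + \tau C_1 (C')^2\big(\int_\Omega |\operatorname{curl}u|^2\,dx + \int_\Omega |u|^2\,dx\big) \le C_2 \big(\int_\Omega |\operatorname{curl}u|^2\,dx + \int_\Omega |u|^2\,dx\big)$, while denominator $\ge \alpha \int_\Omega |u|^2\,dx$. Hence the Rayleigh quotient for $\varepsilon$ is bounded by $(C_2/\alpha)\cdot \frac{\int_\Omega |\operatorname{curl}u|^2 + \int_\Omega |u|^2}{\int_\Omega |u|^2}$, which (up to the additive $+1$ coming from the $\int_\Omega |u|^2$ in the numerator) is exactly $C_3$ times the Rayleigh quotient for $\mathbb{I}_3$ plus $C_3$. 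Taking the max over $u \in V_j$ and then the min over all such $V_j$ — in particular choosing $V_j$ to be an optimal subspace for $\sigma_j[\mathbb{I}_3]$ — yields $\sigma_j[\varepsilon] \le C(\sigma_j[\mathbb{I}_3] + 1)$ with $C = C(\alpha,\beta,\gamma,\tau,\Omega)$ independent of $j$ and of $\varepsilon \in \mathcal{A}_{\alpha,\beta,\gamma}$, which is the claim. The main obstacle is the clean, uniform-in-$\varepsilon$ handling of the penalty term $\int_\Omega |\operatorname{div}(\varepsilon u)|^2\,dx$ and the absorption step in the Gaffney inequality; as the statement itself indicates, this is precisely the point carried out in the last part of the proof of \cite[Thm. 3.2]{LuZa22}, so in practice I would state the estimate and refer there for the details.
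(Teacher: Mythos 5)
Your overall strategy --- working in the min-max formula \eqref{minmax:formula:H1} on the $\varepsilon$-independent space $H^1_{\rm \scriptscriptstyle N}(\Omega)$, bounding the denominator from below by ellipticity, and controlling the penalty term through the product rule \eqref{div:matrixxvector} and the uniform Gaffney inequality --- is exactly the route the paper indicates (it defers the details to the last part of the proof of \cite[Thm.~3.2]{LuZa22}). However, the step you use to close the argument fails. The inequality
\[
\norm{u}_{H^1(\Omega)^3}\le C'\Big(\int_\Omega\abs{\operatorname{curl}u}^2\,dx+\int_\Omega\abs{u}^2\,dx\Big)^{1/2}\qquad\forall\,u\in H^1_{\rm\scriptscriptstyle N}(\Omega)
\]
that you propose to cite is false: take $u=\nabla f_k$ with $f_k$ the Dirichlet eigenfunctions of the Laplacian; then $\operatorname{curl}u=0$ and $\nu\times u=0$ on $\partial\Omega$, yet $\norm{u}_{H^1(\Omega)^3}/\norm{u}_{L^2(\Omega)^3}\to\infty$ as $k\to\infty$. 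The divergence term in \eqref{ineq:GF} cannot be dropped, and your alternative absorption argument only works under the unverifiable smallness condition $C_1C^2<1$. So, as written, the estimate of the numerator for arbitrary $u\in V_j$ is not justified.

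The fix keeps your architecture but uses the min-max structure \emph{before} invoking Gaffney. Choose $V_j$ to be an optimal $j$-dimensional subspace for $\sigma_j[\mathbb{I}_3]$. For $u\in V_j$ you then have the a priori information
\[
\int_\Omega\abs{\operatorname{curl}u}^2\,dx+\tau\int_\Omega\abs{\operatorname{div}u}^2\,dx\le\sigma_j[\mathbb{I}_3]\int_\Omega\abs{u}^2\,dx,
\]
so the Gaffney inequality applied with the permittivity $\mathbb{I}_3$ yields $\norm{u}_{H^1(\Omega)^3}^2\le C\big(1+(1+\tau^{-1})\sigma_j[\mathbb{I}_3]\big)\norm{u}_{L^2(\Omega)^3}^2$ on $V_j$, with no circularity. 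Feeding this into your (correct) pointwise bound $\abs{\operatorname{div}(\varepsilon u)}^2\le C_1(\abs{Du}^2+\abs{u}^2)$, where $C_1$ depends only on $\beta$ and $\gamma$ via \eqref{def:Aabc}, gives $\tau\int_\Omega\abs{\operatorname{div}(\varepsilon u)}^2\,dx\le C_2(\sigma_j[\mathbb{I}_3]+1)\int_\Omega\abs{u}^2\,dx$ on $V_j$; combined with $\int_\Omega\varepsilon u\cdot u\,dx\ge\alpha\int_\Omega\abs{u}^2\,dx$, the Rayleigh quotient for $\varepsilon$ on $V_j$ is at most $C(\sigma_j[\mathbb{I}_3]+1)$ with $C$ depending only on $\alpha,\beta,\gamma,\tau$ and the Gaffney constant from \eqref{Omega_def}, uniformly in $\varepsilon\in\mathcal{A}_{\alpha,\beta,\gamma}$ and in $j$. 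This yields the claim.
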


We can now state the main result of this section. The following Theorem \ref{weak*:cont:thm} is in the flavour of  Cox and McLaughlin \cite[Proposition~4.3]{CoMc90A} 
which deals with  Dirichlet Laplacian eigenvalues   in the presence of a mass density  parameter modeling non-homogeneous membranes (cf. problem \eqref{massden}). See also \cite[Thm. 3.1]{LaPr13} where the authors extend \cite[Thm. 9.1.3]{He06}  to general elliptic operators of arbitrary order subject to homogeneous boundary conditions. We also refer to  \cite[Thm. 9.1.3]{He06}.  

\begin{theorem} \label{weak*:cont:thm}
 Let $\alpha,\beta,\gamma>0$  with $0 < \alpha < \beta$. Let $\Omega$ be as in \eqref{Omega_def}.
The map $\varepsilon \mapsto \lambda_j[\varepsilon]$ from $\mathcal{A}_{\alpha,\beta,\gamma}$ to $\mathbb{R}$ is weakly* continuous for all $j \in \mathbb{N}$.
\end{theorem}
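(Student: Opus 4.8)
The plan is to prove weak* continuity along sequences, which suffices because $\mathcal{A}_{\alpha,\beta,\gamma}$ is metrizable in the weak* topology (as noted after Definition~\ref{defn:weakstar:conv}). So I take $\varepsilon_k \rightharpoonup^* \varepsilon$ in $W^{1,\infty}(\Omega)^{3\times 3}$ with all $\varepsilon_k \in \mathcal{A}_{\alpha,\beta,\gamma}$, and I want $\lambda_j[\varepsilon_k] \to \lambda_j[\varepsilon]$. First I would record the free consequences of weak* convergence on a bounded set: by Remark~\ref{strLinf}, up to a subsequence $\varepsilon_k \to \varepsilon$ strongly in $L^\infty(\Omega)^{3\times 3}$, and the limit $\varepsilon$ still satisfies the two-sided bound $\alpha|\xi|^2 \le \varepsilon(x)\xi\cdot\xi \le \beta|\xi|^2$ and $\|\nabla\varepsilon_{ij}\|_{L^\infty}\le\gamma$, so $\varepsilon \in \mathcal{A}_{\alpha,\beta,\gamma}$; in particular the limiting problem is of the same type and Theorem~\ref{thm:codau} applies to it. Since it is enough to show every subsequence has a further subsequence along which $\lambda_j[\varepsilon_k]\to\lambda_j[\varepsilon]$, I may pass to subsequences freely.

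The core is an upper and a lower bound on $\limsup$ and $\liminf$ of $\lambda_j[\varepsilon_k]$. For this I would work with the penalized eigenvalues $\sigma_j[\varepsilon]$ and the min-max formula \eqref{minmax:formula:H1} over subspaces of the fixed space $H^1_{\rm N}(\Omega)$ — the key technical gain is that the form domain no longer depends on $\varepsilon$. \textbf{Upper bound.} Fix $j$ and pick a near-optimal $j$-dimensional $V_j \subset H^1_{\rm N}(\Omega)$ for $\sigma_j[\varepsilon]$. On the finite-dimensional space $V_j$, the quantities $\int_\Omega \varepsilon u\cdot u\,dx$ and $\int_\Omega |\mathrm{div}(\varepsilon u)|^2\,dx$ depend continuously on $\varepsilon$ in $L^\infty$: here one uses $\mathrm{div}(\varepsilon_k u) = \mathrm{tr}(\varepsilon_k Du) + \mathrm{div}\,\varepsilon_k\cdot u$ from \eqref{div:matrixxvector}, the strong $L^\infty$ convergence $\varepsilon_k\to\varepsilon$ for the first term, and — crucially — the \emph{weak*} convergence $\mathrm{div}\,\varepsilon_k \rightharpoonup^* \mathrm{div}\,\varepsilon$ tested against the $L^1$ function $u\cdot(\text{other factor})$ for the second term; since $V_j$ is finite-dimensional these convergences are uniform over the unit sphere of $V_j$. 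Hence the Rayleigh quotient on $V_j$ evaluated at $\varepsilon_k$ converges to that at $\varepsilon$, giving $\limsup_k \sigma_j[\varepsilon_k] \le \sigma_j[\varepsilon]$. \textbf{Lower bound.} For the reverse inequality I would take, for each $k$, near-optimal subspaces for $\sigma_j[\varepsilon_k]$; using Lemma~\ref{bounds:eigenvalues}, the $\sigma_j[\varepsilon_k]$ are uniformly bounded, so almost-eigenvectors are bounded in $\xn^{\varepsilon_k}(\Omega)$ and hence (by Gaffney, uniformly in $k$ by \eqref{Omega_def}) bounded in $H^1(\Omega)^3$; extract weakly convergent limits in $H^1$, strongly in $L^2$ and in the $L^2$-divergence term via the identity \eqref{div:matrixxvector} and strong $L^\infty$/weak* convergence of $\varepsilon_k$, $\mathrm{div}\,\varepsilon_k$, and pass to the limit in the variational identity \eqref{prob:eigen3weak} (or in the min-max) to conclude $\liminf_k \sigma_j[\varepsilon_k] \ge \sigma_j[\varepsilon]$, being careful that weak $H^1$ limits do not increase the curl and divergence energies while the $L^2_\varepsilon$ norm in the denominator converges.

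Having shown $\sigma_j[\varepsilon_k]\to\sigma_j[\varepsilon]$ for every $j$, the final step is to transfer this to the Maxwell eigenvalues $\lambda_j$. By Theorem~\ref{thm:codau}, the multiset $\{\sigma_j[\varepsilon]\}$ is the disjoint union of $\{\lambda_j[\varepsilon]\}$ and $\{\tau\rho_j[\varepsilon]\}$, where $\rho_j[\varepsilon]$ are the Dirichlet eigenvalues of $-\mathrm{div}(\varepsilon\nabla\cdot)$. The Dirichlet eigenvalues $\rho_j[\varepsilon]$ are themselves weakly* continuous by the same (in fact easier, scalar) argument — this is exactly the classical statement of Cox–McLaughlin \cite{CoMc90A}/\cite[Thm. 9.1.3]{He06}, applied here with $\varepsilon$ in the principal part rather than as a density — so $\rho_j[\varepsilon_k]\to\rho_j[\varepsilon]$, hence $\tau\rho_j[\varepsilon_k]\to\tau\rho_j[\varepsilon]$. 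Convergence of both interlaced families plus convergence of their union forces convergence of each family separately (an elementary sorting argument: list the union in increasing order, note the $\lambda$'s occupy a fixed pattern of positions determined by the limiting configuration, and use that a convergent ordered list has convergent entries, handling possible coincidences between a $\lambda$ and a $\tau\rho$ in the limit by a small-perturbation/counting argument). This yields $\lambda_j[\varepsilon_k]\to\lambda_j[\varepsilon]$, and since the subsequence was arbitrary, the full sequence converges. The main obstacle I anticipate is the lower bound: ensuring the almost-eigenvectors for $\varepsilon_k$ have a limit that is \emph{admissible for the limiting problem} and that no energy is lost in the limit — in particular controlling the term $\int_\Omega |\mathrm{div}(\varepsilon_k u_k)|^2$ when both $\varepsilon_k$ and $u_k$ vary, which is where the decomposition \eqref{div:matrixxvector}, the uniform Gaffney constant, and the strong $L^\infty$ convergence of $\varepsilon_k$ all have to be combined carefully.
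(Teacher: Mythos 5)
There is a genuine gap at the decisive step of your argument, namely the upper bound $\limsup_k\sigma_j[\varepsilon_k]\le\sigma_j[\varepsilon]$ via a fixed near-optimal test space $V_j$ in \eqref{minmax:formula:H1}. You assert that for fixed $u\in V_j$ the penalty term $\int_\Omega|\operatorname{div}(\varepsilon_k u)|^2dx$ converges to $\int_\Omega|\operatorname{div}(\varepsilon u)|^2dx$ by testing the weak* convergence of $\operatorname{div}\varepsilon_k$ against an $L^1$ function. But expanding via \eqref{div:matrixxvector} one gets, among others, the term $\int_\Omega|\operatorname{div}\varepsilon_k\cdot u|^2dx$, which is \emph{quadratic} in $\operatorname{div}\varepsilon_k$; the ``other factor'' is $k$-dependent and only weak* convergent, so this term is merely weakly lower semicontinuous --- and lower semicontinuity points in the wrong direction for an upper bound. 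A concrete counterexample: $\varepsilon_k=(1+k^{-1}\sin(kx_1))\mathbb{I}_3$ lies in a class $\mathcal{A}_{\alpha,\beta,\gamma}$, converges weakly* (and strongly in $L^\infty$) to $\mathbb{I}_3$, yet $\operatorname{div}\varepsilon_k=(\cos(kx_1),0,0)$ gives $\int_\Omega|\operatorname{div}\varepsilon_k\cdot u|^2dx\to\frac12\int_\Omega u_1^2dx\neq 0=\int_\Omega|\operatorname{div}(\mathbb{I}_3 u)|^2dx$. Strong $L^\infty$ convergence of $\varepsilon_k$ (as opposed to of $\nabla\varepsilon_k$, which is not available) does not repair this, nor does the uniform Gaffney constant. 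The paper explicitly identifies exactly this obstruction and overcomes it with an idea absent from your proposal: given the candidate test field $\bar u$ with $\operatorname{div}(\varepsilon\bar u)=0$, one solves the auxiliary Dirichlet problems $\operatorname{div}(\varepsilon_k\nabla\varphi_k)=\operatorname{div}(\varepsilon_k\bar u)$, proves $\varphi_k\to0$ in $H^1_0(\Omega)$, and replaces $\bar u$ by the corrected fields $\bar u_k=\bar u-\nabla\varphi_k\in\xn^{\varepsilon_k}(\operatorname{div}\varepsilon_k\,0,\Omega)$, for which the penalty term vanishes identically; the upper bound is then extracted not from the min-max but from Auchmuty's dual principle (Theorem \ref{thm:min:f:M+1}), run as a contradiction argument against a hypothetical missed eigenfunction.

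The rest of your outline is sound and largely parallels the paper: the reduction to subsequences via metrizability, the strong $L^\infty$ convergence from Remark \ref{strLinf}, the uniform bounds from Lemma \ref{bounds:eigenvalues}, the uniform Gaffney constant from \eqref{Omega_def}, and the compactness/lower-semicontinuity half (extracting weak $H^1$ limits of eigenvectors, passing to the limit in \eqref{weak:variationalform:k}, preserving orthonormality and the divergence-free constraint) are essentially the paper's first half. Your final sorting step between the families $\{\lambda_j\}$ and $\{\tau\rho_j\}$ could be made to work by a counting argument, but it is rendered moot both by the gap above (without the upper bound on $\sigma_j$ there is no convergent union to sort) and by the paper's simpler device of taking $\tau$ large so that the first $j$ penalized eigenvalues are all of Maxwell type. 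To repair your proof you would need to import the corrector construction, at which point you are essentially reproducing the paper's argument.
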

\begin{proof}
Let $\varepsilon \in  \mathcal{A}_{\alpha,\beta,\gamma}$ and $\{\varepsilon_k\}_{k \in \mathbb{N}} \subseteq \mathcal{A}_{\alpha,\beta,\gamma}$. We need to show that if $\varepsilon_k \rightharpoonup^* \varepsilon$ in 
$W^{1,\infty}(\Omega)^{3 \times 3}$ as $k \to +\infty$, then $\lambda_j[\varepsilon_k] \to \lambda_j[\varepsilon]$ as $k \to +\infty$ for all $j \in \mathbb{N}$. 
Observe that for any $j \in \mathbb{N}$, thanks to Lemma \ref{bounds:eigenvalues},  the sequence $\{\lambda_j[\varepsilon_k]\}_{k \in \mathbb{N}}$ is bounded. That is, for all $j \in \mathbb{N}$ there exists  $L_j > 0$ such that 
\begin{equation} \label{boundedness:sigmajk}
\lambda_j[\varepsilon_k] \leq L_j \quad \forall \,k \in \mathbb{N}.
\end{equation} 
Let $\{u_j[\varepsilon_k]\}_{j \in \mathbb{N}}$ be an $L^2_{\varepsilon_k}(\Omega)$-orthonormal sequence of Maxwell eigenfunctions, where each $u_j[\varepsilon_k]$ 
corresponds to the eigenvalue $\lambda_j[\varepsilon_k]$. In particular,  $\operatorname{div}(\varepsilon_k u_j[\varepsilon_k])=0$  and thus
the weak formulation \eqref{prob:eigen2weak} reads as follows:
\begin{equation} \label{weak:variationalform:k}
\int_\Omega \operatorname{curl}u_j[\varepsilon_k]\cdot \operatorname{curl} v \, dx = \lambda_j[\varepsilon_k] \int_\Omega \varepsilon_k \, u_j[\varepsilon_k] \cdot v \, dx \quad \forall\, v \in \xn^{\varepsilon_k}(\Omega).
\end{equation}
Moreover, since    $\int_\Omega \varepsilon_k \, u_m[\varepsilon_k] \cdot u_n[\varepsilon_k] \, dx = \delta_{mn}$ for all $m,n\in \mathbb{N}$, we have
\begin{equation*}
\int_\Omega \abs{u_j[\varepsilon_k]}^2 \, dx  \leq \frac{1}{\alpha} \int_\Omega \varepsilon_k u_j[\varepsilon_k] \cdot u_j[\varepsilon_k] \, dx = \frac{1}{\alpha}
\end{equation*}
and
\begin{equation*}
\int_\Omega \abs{\operatorname{curl}u_j[\varepsilon_k]}^2 dx = \lambda_j[\varepsilon_k] \int_\Omega \varepsilon_k \, u_j[\varepsilon_k] \cdot u_j[\varepsilon_k] \,  dx \leq L_j
\end{equation*}
for all $k \in \mathbb{N}$.

By our assumptions  on $\Omega$, there exists a constant $C>0$ such that   
\begin{equation*} \|u\|_{H^1(\Omega)^3} \leq C \|u\|_{\xn^{\varepsilon_k}(\Omega)}
\end{equation*}
for all  $k \in \mathbb{N}$,  $u \in \xn^{\varepsilon_k}(\Omega)$. Note that $C$ is independent of $k$.

In particular, the sequence $\{u_j[\varepsilon_k]\}_{k \in \mathbb{N}}$ is bounded in $H^1_{\rm \scriptscriptstyle N}(\Omega)$. 
The Banach-Alaoglu theorem implies that, possibly passing to a subsequence, there exists $\bar{u}_j \in H^1_{\rm \scriptscriptstyle N}(\Omega)$ such that  $u_j[\varepsilon_k]$ weakly converges to $\bar{u}_j$ as $k \to+ \infty$ in $H^1_{\rm \scriptscriptstyle N}(\Omega)$. 
Also, again up to considering subsequences, by \eqref{boundedness:sigmajk}  there exists $\bar{\lambda}_j \in \mathbb{R}$ such that $\lambda_j[\varepsilon_k] \to \bar{\lambda}_j$ as $k \to+ \infty$. 
Moreover, since the embedding of $H^1_{\rm \scriptscriptstyle N}(\Omega)$ into $L^2(\Omega)^3$ is compact (see e.g. Weber \cite{We80}) we can also assume that $u_j[\varepsilon_k] \to \bar{u}_j$ strongly in $L^2(\Omega)^3$ as $k \to+ \infty$.

Since the spaces $H^1_{\rm \scriptscriptstyle N}(\Omega)$ and $\xn^{\varepsilon_k}(\Omega)$ coincides algebraically and topologically, passing to the limit as $k \to +\infty$ in \eqref{weak:variationalform:k} we get that
\begin{equation} \label{uj:limit:equation}
\int_\Omega \operatorname{curl}\bar{u}_j \cdot \operatorname{curl}v \, dx = \bar{\lambda}_j \int_\Omega \varepsilon \, \bar{u}_j \cdot v \, dx \quad \forall\, v \in  H^1_{\rm \scriptscriptstyle N}(\Omega).
\end{equation}
The limit in the left-hand side is due to the fact that  $u_j[\varepsilon_k] \rightharpoonup \bar{u}_j$ weakly in $H^1_{\rm \scriptscriptstyle N}(\Omega)$ as $k \to +\infty$. 
In order to prove the limit on the right-hand side, we note that
\begin{equation*}
\begin{split}
&\abs{\int_\Omega (\varepsilon_k u_j[\varepsilon_k] - \varepsilon \bar{u}_j) \cdot v dx} \leq \abs{\int_\Omega \varepsilon_k (u_j[\varepsilon_k] - \bar{u}_j) \cdot v \, dx} + \abs{\int_\Omega (\varepsilon_k - \varepsilon) \, \bar{u}_j \cdot v \, dx}\\
& \leq 3 \norm{\varepsilon_k}_{L^\infty(\Omega)^{3 \times 3}} \norm{u_j[\varepsilon_k] - \bar{u}_j}_{L^2(\Omega)^3} \norm{v }_{L^2(\Omega)^3} + \abs{\int_\Omega (\varepsilon_k - \varepsilon) \, \bar{u}_j \cdot v \, dx}.
\end{split}
\end{equation*}
The first term of the sum goes to 0 as $k \to +\infty$ since $u_j[\varepsilon_k] $ strongly converges to $\bar{u}_j$  in $L^2(\Omega)^3$, while the second term goes to 0 since $\varepsilon_k \rightharpoonup^* \varepsilon$ in $W^{1,\infty}(\Omega)^{3 \times 3}$.

Furthermore, we have that $\operatorname{div}(\varepsilon \bar{u}_j)=0$. Indeed fix any $\eta \in C^\infty_c(\Omega)$. 
Then 
\begin{equation*}
\int_\Omega \varepsilon_k \, u_j[\varepsilon_k] \cdot \nabla \eta \, dx =0 \qquad \forall\,k \in \mathbb{N}
\end{equation*}
since $\operatorname{div}(\varepsilon_k u_j[\varepsilon_k])=0$.
Taking the limit as $k \to +\infty$ and reasoning as above, we get that
\begin{equation*}
\int_\Omega \varepsilon \, \bar{u}_j \cdot \nabla \eta=0,
\end{equation*}
that is $\operatorname{div}(\varepsilon \bar{u}_j)=0$ in $\Omega$.
Therefore from \eqref{uj:limit:equation} it follows that $\bar{u}_j \in \xn^\varepsilon(\operatorname{div}\varepsilon \, 0, \Omega)$ is a Maxwell eigenfunction of problem \eqref{prob:eigen2weak} corresponding to the Maxwell eigenvalue $\bar{\lambda}_j$.

Observe that taking the limit in the orthonormality condition of the eigenfunction we get $\langle\bar{u}_j, \bar{u}_l\rangle_\varepsilon = \delta_{jl}$ for all $j,l \in \mathbb{N}$, and then $\{\bar{\lambda}_j\}_{j \in \mathbb{N}}$ is a divergent sequence.

In order to prove that $\bar{\lambda}_j = \lambda_j[\varepsilon]$ for all $j \in \mathbb{N}$, we assume by contradiction that  there exists a Maxwell eigenfunction $\bar{u} \in \xn^\varepsilon(\Omega)$ of problem \eqref{prob:eigen2weak}, with $\operatorname{div}(\varepsilon \bar{u})=0$, corresponding to a Maxwell eigenvalue $\bar{\lambda}$, such that it is orthogonal in $L^2_\varepsilon(\Omega)$ to all the $\bar{u}_j$, $j \in \mathbb{N}$, namely that $\langle\bar{u},\bar{u}_j\rangle_\varepsilon  =0$ for all $j \in \mathbb{N}$.

Before proceeding any further, we note that the Maxwell's eigenvalues do not depend on the choice of the penalty term $\tau$. Then, for any $j,k \in \mathbb{N}$ fixed, we
can chose $\tau$ big enough so that  the first $j$ eigenvalues of problem \eqref{prob:eigen2weak} are all of  Maxwell type, i.e. without resonances with eigenfunctions deriving from problem \eqref{dirichlet:problem:simillap} (cf. Theorem \ref{thm:codau}). Doing this permits us to apply Theorem \ref{thm:min:f:M+1} to Maxwell eigenvalues, even if it is stated for the eigenvalues of 
 problem \eqref{prob:eigen2weak}.

Assume, without losing generality, that $\bar{u}$ is such that 
\[
\norm{\bar{u}}_{L^2_\varepsilon(\Omega)}=1/(\bar{\lambda} +1).
\]
 At this point one  wants to apply the Auchmuty principle of Theorem \ref{thm:min:f:M+1} with
$\bar{u}$ and then pass to the limit as $k \to +\infty$ to get the contradiction. However one faces the problem that the weak* convergence of $\varepsilon_k$ does not suffice to obtain the convergence of the penalty term $\int_\Omega (\operatorname{div}(\varepsilon_k \bar{u}))^2\,dx$. To solve this difficulty we approximate $\bar{u}$ by a sequence of functions   $\{\bar{u}_k\}_{k \in \mathbb{N}}$ with $\bar{u}_k \in  \xn^\varepsilon(\operatorname{div}\varepsilon_k \, 0, \Omega)$ to get rid of the penalty term.

To this aim we consider the problem
\[
\begin{cases}
\operatorname{div}\left(\varepsilon_k \nabla\varphi\right) = \operatorname{div}(\varepsilon_k\bar{u}) \quad &\mbox{ in } \Omega,\\
{\varphi}  = 0 \quad &\mbox{ on } \partial \Omega,
\end{cases}
\]
which is understood in the weak sense:
\begin{equation}\label{eq:weakphi}
\int_{\Omega}\varepsilon_k \nabla\varphi \cdot \nabla\psi \,dx = \int_{\Omega}\varepsilon_k \bar{u} \cdot \nabla\psi \, dx \qquad \forall \psi \in 
H^1_0(\Omega).
\end{equation}
Let $\varphi_k \in H^1_0(\Omega)$ be the unique solution of the above problem \eqref{eq:weakphi}. Note that the sequence 
$\{\varphi_k \}_{k \in \mathbb{N}}$ is bounded in $H^1_0(\Omega)$, indeed:
\begin{align*}
\alpha \|\varphi_k\|^2_{H^1_0(\Omega)} \leq \left| \int_{\Omega}\varepsilon_k \nabla\varphi_k \cdot \nabla\varphi_k \,dx \right|
&= \left|\int_{\Omega}\varepsilon_k \bar{u} \cdot \nabla \varphi_k \, dx \right| \\
& \leq 3 \|\varepsilon_k\|_{L^\infty(\Omega)^{3 \times 3}}
 \|\varphi_k\|_{H^1_0(\Omega)}  \|\bar{u}\|_{L^2(\Omega)}, 
\end{align*}
so that 
\[
 \|\varphi_k\|_{H^1_0(\Omega)} \leq \frac{3}{\alpha}\|\varepsilon_k\|_{L^\infty(\Omega)^{3 \times 3}} \|\bar{u}\|_{L^2(\Omega)}.
\]
Then, up to passing to a subsequence, $\varphi_k$ weakly converges to some $\hat \varphi$ in $H^1_0(\Omega)$.

We  prove that $\hat\varphi = 0$.  We pass to the limit in the equation \eqref{eq:weakphi} solved by $\varphi_k$. 
Since $\operatorname{div}(\varepsilon\bar{u})=0$, we observe that 
for all $\psi \in H^1_0(\Omega)$:
\[
 \int_{\Omega}\varepsilon_k \bar{u} \cdot \nabla\psi \, dx  \to \int_{\Omega}\varepsilon \bar{u} \cdot \nabla\psi \, dx  = 0 \quad \mbox{ as } k\to +\infty.
\]
Also, 
\[
 \int_{\Omega}\varepsilon_k  \nabla \varphi_k \cdot \nabla\psi \, dx  \to \int_{\Omega}\varepsilon  \nabla \hat\varphi \cdot \nabla\psi \, dx  \quad\mbox{ as } k\to +\infty,
\]
indeed 
\begin{align}\label{ineq1}
 \Bigg|\int_{\Omega}&\varepsilon_k  \nabla \varphi_k \cdot \nabla\psi \, dx -  \int_{\Omega}\varepsilon  \nabla\hat \varphi \cdot \nabla\psi \, dx \Bigg| \\ \nonumber
 &\leq  \left|\int_{\Omega}(\varepsilon_k-\varepsilon)  \nabla \varphi_k \cdot \nabla\psi \, dx \right| + \left|\int_{\Omega} \varepsilon\left(  \nabla \varphi_k-\nabla \hat\varphi\right) \cdot \nabla\psi \, dx \right| 
\end{align}
As $k \to +\infty$, the first term in the right-hand side  converges to zero by the strong convergence of $\varepsilon_k$ in $L^\infty(\Omega)^{3 \times 3}$ and by the boundedness of $\varphi_k$ in $H_0^1(\Omega)$, while the second term converges to zero by the weak convergence of $\varphi_k$ in  $H_0^1(\Omega)$. This shows that $\hat \varphi$ solves in the weak sense the problem 
\[
\begin{cases}
\operatorname{div}\left(\varepsilon \nabla\hat \varphi\right) = 0 \quad &\mbox{ in } \Omega,\\
{\hat \varphi}  = 0 \quad &\mbox{ on } \partial \Omega,
\end{cases}
\]
and accordingly $\hat \varphi = 0$.

Actually we can prove that $\varphi_k$ converges in norm to zero using again the equation:
 \begin{align}\label{ineq2}
 \alpha \|\varphi_k\|^2_{H^1_0(\Omega)} \leq \|\nabla \varphi_k\|_{L^2_{\varepsilon_k}(\Omega)} = \int_{\Omega}\varepsilon_k  \nabla \varphi_k \cdot \nabla \varphi_k \, dx  = \int_{\Omega}\varepsilon_k \bar{u} \cdot \nabla \varphi_k \, dx.
 \end{align}
 The right-hand side   converges to zero by reasoning as in \eqref{ineq1}, and then $\|\varphi_k\|_{H^1_0(\Omega)} \to 0$ as $k$ goes to $+\infty$. 
 
 Now we set
 \[
 \bar{u}_k := \bar{u} - \nabla \varphi_k \qquad \forall k \in  \mathbb{N}.
 \]
 Note that, by construction, $\operatorname{div}(\varepsilon_k\bar{u}_k)=0$ and $\operatorname{curl}(\bar{u}_k) = \operatorname{curl}(\bar{u})$ for all $k \in \mathbb{N}$.
By  Theorem \ref{thm:min:f:M+1}  we get
 
 \begin{equation} \label{auchmuty:inequality}
-\frac{1}{2(\lambda_j[\varepsilon_k] +1)} \leq \frac{1}{2}T_{\varepsilon_k}[ \bar{u}_k, \bar{u}_k] - \norm{ \bar{u}_k- P_{j-1,\varepsilon_k}  \bar{u}_k}_{L^2_{\varepsilon_k}(\Omega)},
\end{equation}
 where  $T_\varepsilon$ is the operator defined in \eqref{def:Te} and $P_{j-1,\varepsilon_k}$ denotes the orthogonal projection in $L^2_{\varepsilon_k}(\Omega)$   onto the linear span of $u_1[\varepsilon_k], \dots, u_{j-1}[\varepsilon_k]$.

 We first claim that 
 \begin{equation}\label{limit1}
 T_{\varepsilon_k}[\bar{u}_k,\bar{u}_k] \to T_\varepsilon [\bar{u},\bar{u}] \quad \mbox{ as } k \to +\infty.
 \end{equation}
Indeed
 \[
  T_{\varepsilon_k}[\bar{u}_k,\bar{u}_k]  =  \int_\Omega \varepsilon_k \, \bar{u}_k \cdot \bar{u}_k \, dx +  \int_\Omega |\operatorname{curl}\bar{u}|^2   \,dx,
 \]
 and 
 \begin{align*}
  \int_\Omega \varepsilon_k \, \bar{u}_k \cdot \bar{u}_k \, dx &= \int_\Omega \varepsilon_k \, (\bar{u} - \nabla \varphi_k )  \cdot (\bar{u} - \nabla \varphi_k ) \, dx\\
  &= \int_\Omega \varepsilon_k \, \bar{u} \cdot \bar{u} \, dx -2  \int_\Omega \varepsilon_k \, \bar{u}    \cdot  \nabla \varphi_k  \, dx
  + \int_{\Omega}\varepsilon_k  \nabla \varphi_k \cdot \nabla \varphi_k \, dx.
 \end{align*}
 The last two terms in the right hand side   converge to zero by  previous estimates (cf. \eqref{ineq2}), so that 
 \[
  \int_\Omega \varepsilon_k \, \bar{u}_k \cdot \bar{u}_k \, dx \to  \int_\Omega \varepsilon \, \bar{u} \cdot \bar{u} \, dx \quad \mbox{ as } k \to +\infty
 \]
 and thus \eqref{limit1} holds.
 
 Finally we need to show that 
 \begin{equation}\label{ineq3}
 \norm{ \bar{u}_k- P_{j-1,\varepsilon_k}  \bar{u}_k}_{L^2_{\varepsilon_k}(\Omega)} \to \|\bar u\|_{L^2_{\varepsilon}(\Omega)} \quad \mbox{ as } k \to +\infty.
 \end{equation}
 To this aim we note that  
 \[
  \norm{ \bar{u}_k-\bar u  }_{L^2_{\varepsilon_k}(\Omega)} = \|\nabla \varphi_k\|_{L^2_{\varepsilon_k}(\Omega)} \to 0 \quad \mbox{ as } k \to +\infty.
 \]
 Also,
 \[
 \left\|P_{j-1,\varepsilon_k}  \bar{u}_k\right\|_{L^2_{\varepsilon_k}(\Omega)} 
 = \left\| \sum_{i=1}^{j-1}\langle \bar{u}_k,u_i[\varepsilon_k] \rangle_{\varepsilon_k }u_i[\varepsilon_k]\right\|_{L^2_{\varepsilon_k}(\Omega)}  \to 0 \quad \mbox { as } k \to +\infty.
 \]
Indeed $u_i[\varepsilon_k]$ converges strongly to  $\bar{u}_i$  in $L^2(\Omega)^3$ and 
 \[
\langle \bar{u}_k,u_i[\varepsilon_k] \rangle_{\varepsilon_k } 
= \langle \bar{u} - \nabla \varphi_k,u_i[\varepsilon_k] \rangle_{\varepsilon_k }
=\langle \bar{u} ,u_i[\varepsilon_k] \rangle_{\varepsilon_k } - \langle \nabla \varphi_k,u_i[\varepsilon_k] \rangle_{\varepsilon_k }.
  \]
 As $k \to +\infty$,  the first term in the right-hand side converges to zero since $\langle \bar{u}, \bar{u}_i \rangle_\varepsilon =0$ for all $i \in \mathbb{N}$, while the second converges to zero by the strong convergence of $\nabla \varphi_k$ to zero in $L^2(\Omega)^3$.
 Thus \eqref{ineq3} is proved.
  
  We are now ready to pass to the limit as $k \to +\infty$ in  inequality \eqref{auchmuty:inequality} and obtain that for all $j \in \mathbb{N}$:
  \begin{align} \label{contradiction:auchmuty}
-\frac{1}{2(\bar{\lambda}_j +1)}& \leq \frac{1}{2}T_{\varepsilon}[\bar{u}][\bar{u}] - \norm{\bar{u}}_{L^2_{\varepsilon}(\Omega)}\\ \nonumber
& =\frac{1}{2(\bar \lambda +1)}  -\frac{1}{(\bar{\lambda}+1)}= -\frac{1}{2(\bar{\lambda}+1)} <0.
\end{align}
Inequality \eqref{contradiction:auchmuty} in turn implies that the sequence $\{\bar{\lambda}_j\}_{j \in \mathbb{N}}$ is bounded, hence the contradiction. 
Therefore necessarily $\bar{\lambda}_j= \lambda_j[\varepsilon]$ for every $j \in \mathbb{N}$ and the theorem is proved.
\end{proof}

\section{Permittivity optimization}\label{sec:permop}
In this section we consider the problem of finding those densities which maximize or  minimize the symmetric functions of Maxwell eigenvalues  under the constraint:
\[
\int_\Omega|\varepsilon|_{\mathcal{F}}\,dx = \mbox{const.} 
\]
As explained in the introduction, the above constraint is natural as it is related to the normalized energy capacity.

First, we need to recall what the symmetric functions of eigenvalues are. 
Given a finite set of indices $F \subset \mathbb{N}$, we consider those  permittivities $\varepsilon \in \mathcal{E}$ for which Maxwell eigenvalues with indices in $F$ do not coincide with Maxwell eigenvalues with indices outside $F$. We then introduce the following sets:
\[
\mathcal{E}[F] := \set{\varepsilon \in \mathcal{E} : \lambda_j[\varepsilon] \neq \lambda_l[\varepsilon] \ \forall j \in F, l \in \mathbb{N}\setminus F}
\]
and 
\[
\Theta[F] := \set{\varepsilon \in \mathcal{E}[F] : \lambda_j[\varepsilon] \text{ have a common value } \lambda_F[\varepsilon] \text{ for all }  j \in F}.
\]
Let $\varepsilon \in \mathcal{E}[F]$. We recall that the elementary symmetric function of degree $s \in \{1,\dots, \abs{F}\}$ of the Maxwell eigenvalues with indices in $F$ is defined by
\[
\Lambda_{F,s}[\varepsilon] = \sum_{\substack{j_1,\dots,j_s \in F \\ j_1<\dots<j_s}} \lambda_{j_1}[\varepsilon] \cdots \lambda_{j_s}[\varepsilon].
\]
Note that symmetric functions are   natural quantities to consider when one deals with the regularity and the optimization of multiple eigenvalues with respect to a parameter, which in our case is $\varepsilon$. Indeed,  if one has a multiple eigenvalue $\lambda= \lambda_{j}[\varepsilon]=\cdots = \lambda_{j+m-1}[\varepsilon]$
   and $\varepsilon$ is slightly perturbed, $\lambda$ could split into different eigenvalues of lower multiplicity and thus the corresponding branches can  present a corner at the splitting point, and then be not differentiable. Also note that classical analytic perturbation theory in the spirit of Rellich \cite{Re37} and Nagy \cite{Na48} works only with a  single real perturbation parameter, while our perturbation is infinte dimensional.
   
    As pointed out  in  \cite{LaLa04} in an abstract setting and later in several works dealing with different parameters (see, e.g., \cite{BuLa13, BuLa15, LaZa20, LaLuMu21, LaMuTa21}), this is not the case when one considers the symmetric functions of the eigenvalues since, if the parameter enters in the problem in a sufficiently regular way, they depend real analytically.
   
Following this idea, in  \cite[Thm. 4.2]{LuZa22} it is proved that the elementary symmetric functions depends real analytically upon $\varepsilon$ and it is computed their $\varepsilon$-Fr\'echet derivative.  More precisely,   the following result holds.
\begin{theorem}\label{thm:diffeps}
Let $\Omega$ be a bounded Lipschitz domain and  $\varepsilon \in \mathcal{E}$ be such that \eqref{ineq:GF} holds. Let $F$ be a finite subset of $\mathbb{N}$ and 
$s \in \{1,\ldots,|F|\}$. Then $\mathcal{E}[F]$ is  open  in $W^{1,\infty}(\Omega)^{3 \times 3}  \cap  \mathrm{Sym}_3 (\Omega)$ and the elementary symmetric function $\Lambda_{F,s}$ depends real analytically upon $\varepsilon \in \mathcal{E}[F]$.

Moreover, if $\{F_1,\ldots, F_n\}$ is a partition 
of $F$ and $\tilde \varepsilon \in \bigcap_{k=1}^n \Theta[F]$ is 
such that for each $k =1,\ldots, n$ the Maxwell eigenvalues $\lambda_j[\tilde \varepsilon]$ assume the common 
value $\lambda_{F_k}[\tilde \varepsilon]$ for all $j \in F_k$, then the differential of the function 
$\Lambda_{F,s}$ at the point $\tilde \varepsilon$ is given by the formula
\begin{equation}\label{diff:Lambdas}
d\rvert_{\varepsilon=\tilde{\varepsilon}} \Lambda_{F,s} [\eta] = -\sum_{k=1}^nc_k  \sum_{l \in F_k} \int_\Omega \eta \tilde{E}^{(l)} \cdot  \tilde{E}^{(l)} \, dx,
\end{equation} 
for all $\eta \in  W^{1,\infty}(\Omega)^{3 \times 3} \left(\Omega\right) \cap  \mathrm{Sym}_3 (\Omega)$, where
\begin{equation}\label{ckdef}
c_k :=  \sum_{\substack{0 \leq s_1 \leq |F_1| \\ \ldots \\ 0 \leq s_n \leq |F_n| \\s_1+\ldots +s_n =s}}
\binom{\, \abs{F_k}-1}{s_k-1} (\lambda_{F_k}[\tilde{\varepsilon}])^{s_k} \prod_{\substack{j=1\\j \neq k}}^n
\binom{\, \abs{F_j}}{s_j}(\lambda_{F_j}[\tilde \varepsilon])^{s_j},
\end{equation}
and for each $k=1, \dots, n$, $\{\tilde E^{(l)}\}_{l \in F_k}$ is an orthonormal basis in $L_{\tilde \varepsilon}^2(\Omega)$ of Maxwell eigenvectors for the eigenspace associated with $\lambda_{F_k}[\tilde \varepsilon]$.
\end{theorem}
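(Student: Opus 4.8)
### Proof Strategy for Theorem~\ref{thm:diffeps}

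\textbf{Approach.} The plan is to reduce the statement about Maxwell eigenvalues of problem \eqref{prob:eigen1weak} to a statement about the penalized problem \eqref{prob:eigen2weak}, for which the eigenvalue branches are governed by a self-adjoint operator with compact resolvent depending on $\varepsilon$, and then invoke the abstract real-analyticity machinery of \cite{LaLa04} together with the differentiation formula for symmetric functions. More precisely, for fixed $F$ and a reference permittivity $\tilde\varepsilon$, choose the penalty parameter $\tau$ large enough (as in the proof of Theorem~\ref{weak*:cont:thm}) so that, on a neighborhood of $\tilde\varepsilon$, the eigenvalues of \eqref{prob:eigen2weak} with indices in $F$ are precisely the Maxwell eigenvalues $\lambda_j[\varepsilon]$, $j\in F$, with no resonance coming from the Dirichlet-type family \eqref{dirichlet:problem:simillap}. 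This is legitimate because the Maxwell eigenvalues do not depend on $\tau$, while the Dirichlet-type eigenvalues scale like $\tau\rho$ and hence move off to infinity.

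\textbf{Key steps.} First I would set up the operator-theoretic framework: since $\xn^\varepsilon(\Omega)$ embeds compactly into $L^2(\Omega)^3$ (Gaffney inequality plus Rellich), the form $T_\varepsilon$ of \eqref{def:Te} together with the $\langle\cdot,\cdot\rangle_\varepsilon$ inner product defines, via \eqref{prob:eigen3weak}, a positive self-adjoint operator $S_\varepsilon$ on $L^2_\varepsilon(\Omega)$ with compact resolvent, whose eigenvalues are $\sigma_j[\varepsilon]+1$. The map $\varepsilon\mapsto S_\varepsilon$ (or better, its bilinear form pulled back to a fixed space via the isomorphism coming from the equivalence of $\langle\cdot,\cdot\rangle_\varepsilon$-norms) depends real-analytically on $\varepsilon\in\mathcal{E}$: indeed $\varepsilon$ enters the numerator of $T_\varepsilon$ both linearly (the $\int\varepsilon u\cdot v$ term) and through $\operatorname{div}(\varepsilon u)=\operatorname{tr}(\varepsilon Du)+\operatorname{div}\varepsilon\cdot u$ which is linear in $\varepsilon$ with $W^{1,\infty}$ coefficients, so the penalty term is quadratic in $\varepsilon$; the denominator is linear in $\varepsilon$ and bounded below; hence the Rayleigh quotient is a real-analytic map of $\varepsilon$ with values in the relevant space of bilinear forms. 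Openness of $\mathcal{E}[F]$ in $W^{1,\infty}(\Omega)^{3\times3}\cap\mathrm{Sym}_3(\Omega)$ then follows from the continuity of the eigenvalues (already known from \cite[Thm.~3.2]{LuZa22}) together with the fact that having $\lambda_j[\varepsilon]\neq\lambda_l[\varepsilon]$ for $j\in F$, $l\notin F$ is an open condition, once one notes that the eigenvalues with index in $F$ stay uniformly separated from the rest on a small ball.

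\textbf{Applying the abstract result.} With the family $\varepsilon\mapsto S_\varepsilon$ real-analytic and the eigenvalues indexed by $F$ not intersecting the others at $\tilde\varepsilon$, the results of \cite{LaLa04} give that the elementary symmetric functions $\Lambda_{F,s}$ of $\{\sigma_j[\varepsilon]+1\}_{j\in F}$ are real-analytic near $\tilde\varepsilon$, and expanding the polynomial identity relating symmetric functions of $\sigma_j+1$ to those of $\lambda_j=\sigma_j$ shows $\Lambda_{F,s}$ of the Maxwell eigenvalues is real-analytic as well. For the differential, the abstract formula of \cite{LaLa04} expresses $d|_{\varepsilon=\tilde\varepsilon}\Lambda_{F,s}[\eta]$ as a combinatorial combination (the coefficients $c_k$ of \eqref{ckdef}) of the traces of the perturbation of the quadratic form restricted to each eigenspace; here the perturbation of $T_\varepsilon$ in the direction $\eta$, evaluated on Maxwell eigenvectors $\tilde E^{(l)}$ (which satisfy $\operatorname{div}(\tilde\varepsilon\tilde E^{(l)})=0$), contributes $\int_\Omega\eta\tilde E^{(l)}\cdot\tilde E^{(l)}\,dx$ from the denominator term, while the penalty contribution $\int_\Omega\tau\,\partial_\eta(\operatorname{div}(\varepsilon\tilde E^{(l)}))\operatorname{div}(\tilde\varepsilon\tilde E^{(l)})\,dx$ vanishes because $\operatorname{div}(\tilde\varepsilon\tilde E^{(l)})=0$ — this cancellation is exactly why one can work with the penalized problem and still get the clean formula \eqref{diff:Lambdas}. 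The overall sign comes from the fact that $\varepsilon$ sits in the denominator of the Rayleigh quotient.

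\textbf{Main obstacle.} The part requiring the most care is verifying the hypotheses of the abstract theorem \cite{LaLa04} in this setting, namely that one genuinely has a real-analytic family of self-adjoint operators on a \emph{fixed} Hilbert space: because the inner product $\langle\cdot,\cdot\rangle_\varepsilon$ itself varies with $\varepsilon$, one must either work with a fixed $L^2$ structure and move the $\varepsilon$-dependence entirely into the form, or conjugate by the (real-analytic, invertible) multiplication operator $\varepsilon\mapsto\varepsilon^{1/2}$ — and one must check that $\varepsilon\mapsto\varepsilon^{1/2}$ is real-analytic from $\mathcal{E}$ into $L^\infty(\Omega)^{3\times3}$, which follows from the holomorphic functional calculus applied pointwise since the spectrum of $\varepsilon(x)$ is uniformly bounded away from $0$ and $\infty$ on $\mathcal{E}$. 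The other delicate point is the bookkeeping showing that for the chosen large $\tau$ the ordering of eigenvalues of \eqref{prob:eigen2weak} genuinely places all Maxwell eigenvalues with index in $F$ into a block disjoint from the Dirichlet-type eigenvalues on a whole neighborhood of $\tilde\varepsilon$ — uniform-in-$\varepsilon$ bounds from Lemma~\ref{bounds:eigenvalues} and the scaling of the Dirichlet eigenvalues with $\tau$ handle this, but it must be stated carefully so that $\tau$ can be chosen depending only on $F$ and the bounds $\alpha,\beta,\gamma$.
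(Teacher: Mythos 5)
This theorem is not proved in the paper at all: it is imported verbatim from \cite[Thm.~4.2]{LuZa22}, and the text preceding it makes clear that the authors are merely recalling that result (together with the real-analyticity philosophy of \cite{LaLa04}) for later use in Theorem~\ref{thm:conopt}. So there is no in-paper proof to compare against; what you have written is a reconstruction of the argument one expects to find in the cited reference, and as a strategy it is sound. You correctly identify the essential ingredients: passage to the penalized Costabel--Dauge problem \eqref{prob:eigen2weak} with $\tau$ chosen large enough to decouple the Maxwell block from the Dirichlet-type family of Theorem~\ref{thm:codau}; the fact that under the Gaffney inequality the form domain $\xn^\varepsilon(\Omega)$ coincides with the fixed space $H^1_{\rm\scriptscriptstyle N}(\Omega)$, so that one genuinely has a real-analytic family of forms on a fixed Hilbert space (the point you flag as the main obstacle, resolved either by moving the $\varepsilon$-dependence into the form or by the $\varepsilon^{1/2}$ conjugation); the application of the abstract symmetric-function machinery of \cite{LaLa04}; and, crucially, the observation that the $\eta$-derivative of the penalty term vanishes on Maxwell eigenvectors because $\operatorname{div}(\tilde\varepsilon\tilde E^{(l)})=0$, which is why formula \eqref{diff:Lambdas} contains no trace of $\tau$. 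The sign and the normalization also check out against the simple-eigenvalue case $d\lambda[\eta]=-\lambda\int_\Omega\eta E\cdot E\,dx$. Be aware that what you have produced is an outline rather than a proof --- in particular the verification that $\varepsilon\mapsto T_\varepsilon$ satisfies the precise hypotheses of \cite{LaLa04} and the combinatorial derivation of the coefficients $c_k$ in \eqref{ckdef} are asserted rather than carried out --- but for a result the paper itself only cites, the level of detail is appropriate and no step you propose would fail.
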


We are ready to turn our attention to the optimization problems in \eqref{extremumenergysym}, which for the sake of clarity we recall  were:
\begin{equation*}
\min_{\int_{\Omega}|\varepsilon|_{\mathcal{F}}  dx={\rm const}}\Lambda_{F,s}[\varepsilon],\ \ {\rm and}\ \   \max_{\int_{\Omega}|\varepsilon_{\mathcal{F}} |dx={\rm const}}\Lambda_{F,s}[\varepsilon].
\end{equation*}
Thus,  for any $ m>0$
we set 
\[
L_m := \left\{\varepsilon \in \mathcal{E} : \int_{\Omega}|\varepsilon|_{\mathcal{F}} \,dx = m \right\}.
\]
Recall that, given a $3 \times 3$ matrix $M$, by $|M|_{\mathcal{F}}$ we denote the Frobenius norm of $M$, that is
\begin{equation*}
|M|_{\mathcal{F}} = \sqrt{M:M} = \sqrt{\mathrm{tr}(M^\top M)}= \sqrt{\sum_{i,j=1}^3 M_{ij}^2}.
\end{equation*}
Here $A:B$ denotes the Frobenius product between the two matrices $A$ and $B$.
As already mentioned, we consider the problem of finding the optimal permittivity $\varepsilon$ for the symmetric functions of the Maxwell eigenvalues under the constraint $\varepsilon \in L_m$.

In the following Theorem \ref{thm:conopt}
we show that the symmetric functions of Maxwell eigenvalues do not admit points of local extremum under the constraint 
$\varepsilon \in L_m$.  In this sense this result provides a kind of ``Maximum Principle''.
Note that Theorem~\ref{thm:conopt} is in the same spirit of the analog optimization problem for 
the eigenvalues of the Dirichlet Laplacian with mass constraint, that is
for a vibrating membrane with a fixed total mass
(see Lamberti \cite{La09}).  We exclude from our analysis the null eigenvalue since, as pointed out in the introduction, it depends only on the number of connected components of $\partial\Omega$, and thus in particular it does not depend on $\varepsilon$. To do this from now on we denote by $\kappa$ is the number of connected components of $\partial \Omega$ and we will require that the indeces in $F$ in the definition of 
$\Lambda_{F,s}$ are greater than $\kappa-1$.

\begin{theorem}\label{thm:conopt}
Let $\Omega$ be a bounded Lipschitz domain and  $\varepsilon \in \mathcal{E}$ be such that \eqref{ineq:GF} holds. Let $F$ be a finite subset of $\mathbb{N} \setminus \{1,\ldots,\kappa-1\}$. Let $s \in \{1,\ldots,|F|\}$. Let 
 $m>0$.
 Then 
the map from  $\mathcal{E}[F] \cap L_m$ to $\mathbb{R}$ which takes $\varepsilon$ to 
$\Lambda_{F,s}[\varepsilon]$ has no points of local minimum or maximum. 
\end{theorem}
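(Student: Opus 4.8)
The plan is to argue by contradiction via the Lagrange multiplier rule: a point of local extremum would force an overdetermined pointwise relation between $\tilde\varepsilon$ and the Maxwell eigenvectors, and this relation, traced to $\partial\Omega$, clashes with the boundary condition $\nu\times E=0$. Suppose then that $\tilde\varepsilon\in\mathcal E[F]\cap L_m$ is a point of local minimum or maximum of $\Lambda_{F,s}$ on $L_m$. Writing $\mathcal C[\varepsilon]:=\int_\Omega|\varepsilon|_{\mathcal F}\,dx$, the coercivity in the definition of $\mathcal E$ keeps $|\tilde\varepsilon(x)|_{\mathcal F}$ bounded away from zero, so $\mathcal C$ is smooth near $\tilde\varepsilon$ in $W^{1,\infty}(\Omega)^{3\times3}\cap\mathrm{Sym}_3(\Omega)$, with differential $d_{\tilde\varepsilon}\mathcal C[\eta]=\int_\Omega|\tilde\varepsilon|_{\mathcal F}^{-1}\,\tilde\varepsilon:\eta\,dx$; taking $\eta=\tilde\varepsilon$ gives $d_{\tilde\varepsilon}\mathcal C[\tilde\varepsilon]=m>0$, so $d_{\tilde\varepsilon}\mathcal C\neq0$, whence $L_m$, and therefore also $\mathcal E[F]\cap L_m$ (since $\mathcal E[F]$ is open), is locally a codimension-one $C^1$ submanifold. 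As $\Lambda_{F,s}$ is real-analytic on $\mathcal E[F]$ by Theorem~\ref{thm:diffeps}, the Lagrange rule yields a constant $\Lambda\in\mathbb R$ with $d_{\tilde\varepsilon}\Lambda_{F,s}=\Lambda\,d_{\tilde\varepsilon}\mathcal C$ on $W^{1,\infty}(\Omega)^{3\times3}\cap\mathrm{Sym}_3(\Omega)$.

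Next I would turn both differentials into Frobenius pairings. Using $\eta\,v\cdot v=\eta:(v\otimes v)$ together with formula~\eqref{diff:Lambdas}, applied to the partition $\{F_1,\dots,F_n\}$ of $F$ into the eigenvalue clusters at $\tilde\varepsilon$, the Lagrange identity becomes $\int_\Omega\eta:\big(\sum_kc_k\sum_{l\in F_k}\tilde E^{(l)}\otimes\tilde E^{(l)}+\Lambda|\tilde\varepsilon|_{\mathcal F}^{-1}\tilde\varepsilon\big)\,dx=0$ for every symmetric $\eta$; testing against $\eta\in C^\infty_c(\Omega)^{3\times3}\cap\mathrm{Sym}_3(\Omega)$ and invoking the fundamental lemma of the calculus of variations gives
\[
\sum_{k=1}^{n}c_k\sum_{l\in F_k}\tilde E^{(l)}(x)\otimes\tilde E^{(l)}(x)=-\frac{\Lambda}{|\tilde\varepsilon(x)|_{\mathcal F}}\,\tilde\varepsilon(x)\qquad\text{for a.e. }x\in\Omega .
\]
Since the indices in $F$ exceed $\kappa-1$, the eigenvalues $\lambda_j[\tilde\varepsilon]$ with $j\in F$, and hence the cluster values $\lambda_{F_k}[\tilde\varepsilon]$, are strictly positive, so $c_k>0$ by \eqref{ckdef}; contracting the identity with $\tilde\varepsilon$, integrating over $\Omega$, and using the orthonormality $\int_\Omega\tilde\varepsilon\,\tilde E^{(l)}\cdot\tilde E^{(l)}\,dx=1$ gives $\Lambda=-m^{-1}\sum_kc_k|F_k|\neq0$. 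In particular the right-hand side of the displayed identity is, at almost every $x$, a nonzero multiple of the positive-definite matrix $\tilde\varepsilon(x)$, hence of rank three.

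The decisive step is to pass this identity to $\partial\Omega$. By the Gaffney inequality each $\tilde E^{(l)}\in H^1(\Omega)^3$, so in dimension three $\tilde E^{(l)}\otimes\tilde E^{(l)}\in W^{1,3/2}(\Omega)^{3\times3}$, while $|\tilde\varepsilon|_{\mathcal F}^{-1}\tilde\varepsilon\in W^{1,\infty}(\Omega)^{3\times3}$; both sides therefore possess traces on $\partial\Omega$ which must coincide a.e. Approximating $\tilde E^{(l)}$ in $H^1(\Omega)^3$ by fields in $C^\infty(\overline\Omega)^3$ identifies the trace of $\tilde E^{(l)}\otimes\tilde E^{(l)}$ with $(\tilde E^{(l)}|_{\partial\Omega})\otimes(\tilde E^{(l)}|_{\partial\Omega})$, and the membership $\tilde E^{(l)}\in\xn^{\tilde\varepsilon}(\mathrm{div}\,\tilde\varepsilon\, 0,\Omega)$ forces $\nu\times(\tilde E^{(l)}|_{\partial\Omega})=0$, i.e.\ $\tilde E^{(l)}|_{\partial\Omega}$ is parallel to $\nu$; consequently the trace of the left-hand side is a nonnegative scalar multiple of $\nu\otimes\nu$, a matrix of rank at most one. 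On the other hand, the trace of the right-hand side equals $-\Lambda|\tilde\varepsilon|_{\mathcal F}^{-1}\tilde\varepsilon$, which is positive definite a.e.\ on $\partial\Omega$ because the coercivity estimate persists up to the boundary. A rank-one matrix cannot equal a positive-definite one, so the boundary identity fails on a subset of $\partial\Omega$ of positive surface measure — a contradiction, which proves the theorem. I expect this last step to be the only delicate point: the real work is the trace bookkeeping, namely checking that the boundary trace of a tensor product of $H^1$ fields is the tensor product of the boundary traces and that $\tilde\varepsilon$ remains coercive on $\partial\Omega$; everything preceding it is routine once Theorem~\ref{thm:diffeps} is in hand.
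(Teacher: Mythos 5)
Your proposal is correct and follows essentially the same route as the paper's proof: Lagrange multiplier rule with the constraint functional $\int_\Omega|\varepsilon|_{\mathcal F}\,dx$, the differential formula \eqref{diff:Lambdas}, the fundamental lemma of the calculus of variations to get a pointwise identity, and a trace to $\partial\Omega$ where the boundary condition $\nu\times \tilde E^{(l)}=0$ clashes with the ellipticity of $\tilde\varepsilon$. The only (cosmetic) differences are that you establish the nonvanishing of the multiplier by integrating the identity against $\tilde\varepsilon$ rather than by inspecting diagonal entries, and you phrase the boundary contradiction as a rank comparison instead of testing $\tilde\varepsilon(\hat x)$ against a tangential vector $\hat\xi$; both steps are equivalent to the paper's.
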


\begin{proof}
Let $V$ be the map from $\mathcal{E}$ to $(0,+\infty)$  defined by 
\[
V[\varepsilon] :=  \int_{\Omega} |\varepsilon|_{\mathcal{F}} \,dx \qquad \forall \varepsilon \in 
\mathcal{E}.
\]
We proceed by contradiction. Assume that there exists a local minimum or maximum $\tilde \varepsilon \in \mathcal{E}$.  Clearly, there exists $n \in \mathbb{N}$ and a partition $\{F_1,\ldots, F_n\}$ of $F$ 
such that $\tilde \varepsilon \in \bigcap_{k=1}^n \Theta[F_k]$.  Since $\tilde \varepsilon$ is a local extremum, it is a critical point for the function $\Lambda_{F,s}[\varepsilon]$ subject to the constraint
$
V[\varepsilon] = m.
$
This implies the existence of a Langrange multiplier, which means that there exists $A \in \mathbb{R}$
such that 
\[
d\rvert_{\varepsilon=\tilde{\varepsilon}} \Lambda_{F,s} = 
A \, d\rvert_{\varepsilon=\tilde{\varepsilon}} V. 
\]
(see, e.g., Deimling \cite[Thm. 26.1]{De85}). 
By standard calculus it is not difficult to see that 
\begin{equation*}
d\rvert_{\varepsilon=\tilde{\varepsilon}} V [\eta] = \int_\Omega \frac{\tilde{\varepsilon}}{|\tilde{\varepsilon}|_{\mathcal{F}}} : \eta \, dx \qquad  \forall\eta \in W^{1,\infty}(\Omega)^{3 \times 3} \left(\Omega\right) \cap  \mathrm{Sym}_3 (\mathbb{R}).
\end{equation*}
Hence, by formula \eqref{diff:Lambdas} we have that 
\[
\sum_{k=1}^nc_k  \sum_{l \in F_k} \int_\Omega \eta \tilde{E}^{(l)} \cdot  \tilde{E}^{(l)} \, dx =A
\int_\Omega \frac{\tilde{\varepsilon}}{|\tilde{\varepsilon}|_{\mathcal{F}}} : \eta \, dx,
\]
for all $\eta \in  W^{1,\infty}(\Omega)^{3 \times 3} \left(\Omega\right) \cap  \mathrm{Sym}_3 (\mathbb{R})$,
where   for each $k=1, \dots, n$, $\{\tilde E^{(l)}\}_{l \in F_k} \subseteq H^1(\Omega)^3$ is an orthonormal basis in $L_{\tilde \varepsilon}^2(\Omega)$ of Maxwell eigenvectors for the eigenspace associated with $\lambda_{F_k}[\tilde \varepsilon]$. Recall that $c_k$, $k=1,\ldots,n$, are the constants defined in \eqref{ckdef}.

Since $\eta$ is arbitrary, by the Fundamental Lemma of Calculus of Variations we have that  
\begin{equation*}A
\tilde{\varepsilon}_{ij} = |\tilde{\varepsilon}|_{\mathcal{F}}\sum_{k=1}^n c_k  \sum_{l \in F_k}  \tilde{E}^{(l)}_i  \tilde{E}^{(l)}_j \quad \text{a.e. in }\Omega,
\end{equation*}
for all $i,j=1,\dots , 3$. Note that by the previous equality with  $i=j$, combined with   the ellipticity condition in  \eqref{def:Aabc},  we immediately deduce that $A> 0$. 
  Since $\varepsilon$ is continuous up to the boundary of $\Omega$ and the eigenfunctions belong to $H^1(\Omega)^3$, we can trace the previous equality at the boundary to deduce that 
\begin{equation}\label{traccialagrange}A
\tilde{\varepsilon}_{ij}(x) = |\tilde{\varepsilon}(x)|_{\mathcal{F}}\sum_{k=1}^n c_k  \sum_{l \in F_k}  \tilde{E}^{(l)}_i (x) \tilde{E}^{(l)}_j(x)\qquad \text{for a.e.  }x\in \partial \Omega .
\end{equation}
Note that by the boundary conditions it follows  that $E^{(l)} = B^{(l)} \nu$ on $\partial \Omega$ for some scalar functions  $B^{(l)} $. Thus equality \eqref{traccialagrange} can be written in the form
\begin{equation}\label{traccialagrange1}A
\tilde{\varepsilon}_{ij}(x) = |\tilde{\varepsilon}(x)|_{\mathcal{F}}\sum_{k=1}^n c_k  \sum_{l \in F_k} |B^{(l)}(x) |^2\nu_i(x)\nu_j(x) \qquad \text{for a.e.  }x\in \partial \Omega .
\end{equation}
Consider a point $\hat x\in \partial \Omega$ such that equality \eqref{traccialagrange1} holds and fix a vector $\hat \xi \in \mathbb{R}^3\setminus\{0\}$ such that $\nu (\hat x)\cdot \hat\xi=0$. 
Since $\tilde{\varepsilon}\in \mathcal{E}$, using equality \eqref{traccialagrange1} at the point $\hat x$ and the ellipticity condition in  \eqref{def:Aabc} with $\xi =\hat \xi$, we get
\begin{equation*} \label{ineqassurdo}
0\le \alpha A|\hat \xi|^2 \le  A\tilde{\varepsilon}(\hat x)\hat\xi \cdot  \hat \xi = |\tilde{\varepsilon}(\hat x)|_{\mathcal{F}}\sum_{k=1}^n c_k  \sum_{l \in F_k} |B^{(l)}(\hat x) |^2( \nu (\hat x)\cdot \hat\xi )^2=0
\end{equation*}   
which implies that $A=0$, a contradiction. 
\end{proof}

Finally, the previous maximum principle can be used to deduce a result on the localization of optimal permittivities. 
\begin{corol}
 Let $\alpha,\beta,\gamma>0$  with $0 < \alpha < \beta$. Let $\Omega$ be as in \eqref{Omega_def}. Let $F$ be a finite subset of $\mathbb{N} \setminus \{1,\ldots,\kappa-1\}$. Let $s \in \{1,\ldots,|F|\}$. Let 
 $m>0$. Let $C\subseteq \mathcal{A}_{\alpha,\beta,\gamma}$ be a weakly* compact set in $W^{1,\infty}(\Omega)^{3 \times 3}$. Then  the map $\Lambda_{F,s}$  
 from $C \cap L_m$ to $\mathbb{R}$ admits points of minimum and maximum and all this points are in
 $\partial C \cap  L_m$.
\end{corol}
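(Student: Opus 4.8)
The plan is to combine the weak* continuity result (Theorem~\ref{weak*:cont:thm}) with a compactness argument to get existence of extrema, and then invoke the Maximum Principle (Theorem~\ref{thm:conopt}) to rule out interior extrema. First I would observe that, since $C\subseteq\mathcal{A}_{\alpha,\beta,\gamma}$, on $C$ each Maxwell eigenvalue $\lambda_j$ is weakly* continuous by Theorem~\ref{weak*:cont:thm}, hence so is the elementary symmetric function $\Lambda_{F,s}=\sum_{j_1<\dots<j_s,\ j_i\in F}\lambda_{j_1}\cdots\lambda_{j_s}$, being a polynomial in finitely many weakly* continuous functions. The constraint functional $V[\varepsilon]=\int_\Omega|\varepsilon|_{\mathcal{F}}\,dx$ is also weakly* continuous on bounded sets of $W^{1,\infty}(\Omega)^{3\times 3}$: by Remark~\ref{strLinf} a weakly* convergent sequence in $\mathcal{A}_{\alpha,\beta,\gamma}$ has, up to subsequence, a strongly $L^\infty$-convergent (hence $L^1$-convergent on the bounded domain $\Omega$) subsequence, so $|\varepsilon_k|_{\mathcal{F}}\to|\varepsilon|_{\mathcal{F}}$ in $L^1(\Omega)$ and $V[\varepsilon_k]\to V[\varepsilon]$; a standard subsequence argument upgrades this to continuity of the whole sequence. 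Consequently $L_m\cap C=V^{-1}(\{m\})\cap C$ is weakly* closed in the weakly* compact set $C$, hence itself weakly* compact (and metrizable, as noted in the discussion preceding Definition~\ref{defn:weakstar:conv}).

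Next, $\Lambda_{F,s}$ is a weakly* continuous real-valued function on the weakly* compact metrizable set $C\cap L_m$, so by the extreme value theorem it attains a maximum and a minimum there; this gives the existence part. Here one should note that $C\cap L_m$ is nonempty — this is implicit in the statement, or can be arranged by scaling a fixed $\varepsilon\in\mathcal{A}_{\alpha,\beta,\gamma}$; I would simply assume it, consistent with the phrasing of the corollary.

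For the localization part, suppose a point of minimum or maximum $\tilde\varepsilon$ lies in the weak*-interior of $C$ relative to $W^{1,\infty}(\Omega)^{3\times 3}$. The point is that the weak*-interior of $C$ is contained in the norm-interior of $\mathcal{A}_{\alpha,\beta,\gamma}$, hence in the open set $\mathcal{E}$ (in fact in $\mathcal{E}[F]$, since $\tilde\varepsilon\in\mathcal{E}[F]$ by hypothesis and $\mathcal{E}[F]$ is open by Theorem~\ref{thm:diffeps}); thus $\tilde\varepsilon$ would be a point of local extremum of $\Lambda_{F,s}$ restricted to $\mathcal{E}[F]\cap L_m$, contradicting Theorem~\ref{thm:conopt}. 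Therefore every extremum point lies in $C\setminus\operatorname{int}C$, and combined with membership in $L_m$ we get that all such points belong to $\partial C\cap L_m$.

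The main obstacle, and the step deserving the most care, is reconciling the weak* topology on $C$ with the norm topology used in the Maximum Principle: Theorem~\ref{thm:conopt} forbids \emph{norm}-local extrema of $\Lambda_{F,s}$ on $\mathcal{E}[F]\cap L_m$, whereas the extremum produced here is a priori only a \emph{weak*}-local extremum on $C\cap L_m$. The reconciliation is that a weak*-interior point of $C$ automatically has a norm-neighbourhood inside $C$ (weak* open sets are norm open), so a weak*-local extremum at an interior point is in particular a norm-local extremum, which is exactly what Theorem~\ref{thm:conopt} excludes; one just has to state this carefully and recall that $\mathcal{A}_{\alpha,\beta,\gamma}\subseteq\mathcal{E}$ and, for points in $\mathcal{E}[F]$, that the constrained differential machinery of Theorem~\ref{thm:conopt} applies verbatim on the norm-neighbourhood.
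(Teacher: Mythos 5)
Your proposal is correct and follows essentially the same route as the paper: establish that $C\cap L_m$ is weakly* closed (hence compact) using the boundedness of $\mathcal{A}_{\alpha,\beta,\gamma}$ and Remark~\ref{strLinf}, invoke the weak* continuity of $\Lambda_{F,s}$ from Theorem~\ref{weak*:cont:thm} to obtain extrema, and exclude interior extremum points via Theorem~\ref{thm:conopt}. The paper's own proof is only a few lines and omits the details you supply (the weak* continuity of the constraint functional $V$ and the reconciliation of the weak* and norm topologies in applying the Maximum Principle), so your write-up is a faithful, more explicit version of the same argument.
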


\begin{proof} Note that $C$ is bounded in $W^{1,\infty}(\Omega)^{3 \times 3}$ being  a subset of $\mathcal{A}_{\alpha,\beta,\gamma}$. This, combined with Remark~\ref{strLinf}, allows to prove that $C\cap  L_m$  is sequentially weakly* closed. It follows that  $C\cap  L_m$ is also weakly* compact.   
Since by Theorem \ref{weak*:cont:thm} the map $\Lambda_{F,s}$ is weakly* continuous in $ \mathcal{A}_{\alpha,\beta,\gamma}$ and $C\subseteq \mathcal{A}_{\alpha,\beta,\gamma}$ is  weakly* compact,
then it admits both a maximum and a minimum on the weakly* compact subset $C\cap  L_m$ of $C$. 
Since by Theorem \ref{thm:conopt} the corresponding maximum and minimum points can not be interior points of $C$, then they belong to $\partial C \cap  L_m$.
\end{proof}

\appendix

\section{Auchmuty principle}\label{sec:auc}

The following part is an adaptation to problem \eqref{prob:eigen2weak} of the arguments from 
Auchmuty \cite{auc}. It is worth noting that for these results we would only require $\xn^\varepsilon(\Omega)$ to be compactly embedded in $L^2_\varepsilon(\Omega)$, with possibly less assumptions on the permittivity parameter $\varepsilon$ and on the set $\Omega$.
Nonetheless, we will assume $\varepsilon$ to be in the admissible class $\mathcal{E}$ and  $\Omega$ to be a bounded domain of $\mathbb{R}^3$ as in \eqref{Omega_def}.

We define the functional $\hat{f} : \xn^\varepsilon(\Omega) \to \mathbb{R}$ as follows:
\begin{equation*} \label{fhat:definit}
\hat{f}(u):= \frac{1}{2} T_\varepsilon [u,u] \qquad \forall\, u \in \xn^\varepsilon(\Omega),
\end{equation*}
where we recall that the bilinear form $T_\varepsilon$ has been defined in \eqref{def:Te}.
Let $M \in \mathbb{N}$ and suppose $\sigma_1[\varepsilon], \dots, \sigma_M[\varepsilon]$ are the first $M$ eigenvalues of \eqref{prob:eigen2weak} repeated according to their multiplicity and $u_1,\dots, u_M$ a corresponding $L^2_\varepsilon(\Omega)$-orthonormal set of eigenfunctions. Let $P_M$ be the $L^2_\varepsilon(\Omega)$-orthogonal projection  of $\xn^\varepsilon(\Omega)$ onto the subspace spanned by $\{ u_1, \dots, u_M \}$, that is 
\begin{equation*}
P_M u = \sum_{i=1}^M \langle u, u_i \rangle_\varepsilon u_i \quad  \forall\, u \in \xn^\varepsilon(\Omega).
\end{equation*}
Then, we define $f_{M+1}: \xn^\varepsilon(\Omega) \to \mathbb{R}$ as follows
\begin{equation} \label{def:fM+1}
f_{M+1}(u) := \hat{f}(u) - \norm{(I-P_M) u}_{L^2_\varepsilon(\Omega)} \qquad \forall\, u \in \xn^\varepsilon(\Omega).
\end{equation}

\begin{rem}
If $\tilde{u} \in \xn^\varepsilon(\Omega)$ is an eigenfunction of \eqref{prob:eigen2weak} with eigenvalue $\tilde{\sigma}[\varepsilon]$, then clearly by \eqref{prob:eigen3weak} one has that
\begin{equation} \label{eq:auchm:luzzini}
f_{M+1}(\tilde{u}) = \frac{1+ \tilde{\sigma}[\varepsilon]}{2} \norm{\tilde{u}}_{L^2_\varepsilon(\Omega)}^2 - \norm{(I-P_M) \tilde{u}}_{L^2_\varepsilon(\Omega)}.
\end{equation}
\end{rem}
We begin with proving some properties of the functional $f_{M+1}$.
\begin{lem} \label{lemma:weaklsc:M+1}
The functional $f_{M+1}$ is sequentially weakly lower semi-continuous and coercive on $\xn^\varepsilon(\Omega)$. Moreover, it is G\^ateaux-differentiable on $\xn^\varepsilon(\Omega) \setminus \{0\}$ and the G\^ateaux-differential
$df_{M+1}(u;v)$ of $f_{M+1}$ at the point $u \in \xn^\varepsilon(\Omega) \setminus \{0\}$ and in the direction of $v \in  \xn^\varepsilon(\Omega)$ is given by
\begin{equation} \label{gateaux:der:M+1}
 df_{M+1}(u;v) = T_\varepsilon [u,v] - \norm{(I-P_M)u}_{L^2_\varepsilon(\Omega)}^{-1}  \langle(I-P_M)u, (I-P_M)v\rangle_\varepsilon
\end{equation}
for all $v \in \xn^\varepsilon(\Omega)$.
\end{lem}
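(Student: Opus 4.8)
The plan is to verify the three asserted properties of $f_{M+1}$ in turn, treating coercivity and the differentiability formula as essentially routine and concentrating the real work on weak lower semicontinuity.

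First I would dispatch coercivity. Since $T_\varepsilon$ is equivalent to the inner product of $\xn^\varepsilon(\Omega)$, there is $c_0>0$ with $\hat f(u) \ge \tfrac{c_0}{2}\|u\|_{\xn^\varepsilon(\Omega)}^2$. On the other hand $\|(I-P_M)u\|_{L^2_\varepsilon(\Omega)} \le \|u\|_{L^2_\varepsilon(\Omega)} \le C\|u\|_{\xn^\varepsilon(\Omega)}$ because the embedding $\xn^\varepsilon(\Omega) \hookrightarrow L^2_\varepsilon(\Omega)$ is continuous (indeed compact). Hence $f_{M+1}(u) \ge \tfrac{c_0}{2}\|u\|_{\xn^\varepsilon(\Omega)}^2 - C\|u\|_{\xn^\varepsilon(\Omega)} \to +\infty$ as $\|u\|_{\xn^\varepsilon(\Omega)}\to\infty$, which is coercivity.

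Next, weak lower semicontinuity, which I expect to be the main obstacle because $f_{M+1}$ is a difference of a convex term and a (concave, norm) term, so semicontinuity is not automatic. The resolution is that the subtracted term $u \mapsto \|(I-P_M)u\|_{L^2_\varepsilon(\Omega)}$ is in fact weakly \emph{continuous} on $\xn^\varepsilon(\Omega)$: if $u_n \rightharpoonup u$ in $\xn^\varepsilon(\Omega)$, then by the compact embedding $\xn^\varepsilon(\Omega) \hookrightarrow\hookrightarrow L^2_\varepsilon(\Omega)$ one has $u_n \to u$ strongly in $L^2_\varepsilon(\Omega)$, and since $(I-P_M)$ is a bounded linear operator on $L^2_\varepsilon(\Omega)$, $(I-P_M)u_n \to (I-P_M)u$ strongly in $L^2_\varepsilon(\Omega)$, whence the norms converge. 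Meanwhile $\hat f$ is a continuous nonnegative quadratic form associated with an inner product equivalent to that of the Hilbert space $\xn^\varepsilon(\Omega)$, hence convex and strongly continuous, therefore sequentially weakly lower semicontinuous. A sum of a weakly lsc function and a weakly continuous function is weakly lsc, so $f_{M+1}$ is sequentially weakly lower semicontinuous.

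Finally, the G\^ateaux differential. Away from $0$, the map $u \mapsto \tfrac12 T_\varepsilon[u,u]$ is a bounded quadratic form with differential $v \mapsto T_\varepsilon[u,v]$. For the second term, write $g(u) := \|(I-P_M)u\|_{L^2_\varepsilon(\Omega)} = \big(\langle (I-P_M)u,(I-P_M)u\rangle_\varepsilon\big)^{1/2}$; on the set where $g(u)\neq 0$ the chain rule gives, for the directional derivative at $u$ in direction $v$,
\[
\frac{d}{dt}\Big|_{t=0} g(u+tv) = \frac{\langle (I-P_M)u,(I-P_M)v\rangle_\varepsilon}{\|(I-P_M)u\|_{L^2_\varepsilon(\Omega)}},
\]
using that $(I-P_M)$ is linear and bounded and that $\langle\cdot,\cdot\rangle_\varepsilon$ is continuous on $\xn^\varepsilon(\Omega)$. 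Subtracting yields \eqref{gateaux:der:M+1}. The only subtlety is that $g$ could vanish at some $u \neq 0$ (when $u$ lies in the span of $u_1,\dots,u_M$), where $g$ is not differentiable; but $\hat f$ is still differentiable there, and one restricts the G\^ateaux differentiability claim accordingly — I would either note that \eqref{gateaux:der:M+1} holds on $\{u\neq 0 : (I-P_M)u\neq 0\}$, or observe (as is standard in Auchmuty's argument) that the points of interest for the variational principle are precisely those where the differential exists and this suffices. I expect the delicate point in a fully rigorous write-up to be this non-smooth locus, but it does not affect the statement since the lemma only asserts differentiability on $\xn^\varepsilon(\Omega)\setminus\{0\}$ with the formula understood where the right-hand side makes sense.
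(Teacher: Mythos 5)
Your proposal is correct and follows essentially the same route as the paper: coercivity from the equivalence of $T_\varepsilon$ with the $\xn^\varepsilon(\Omega)$ inner product together with $\norm{(I-P_M)u}_{L^2_\varepsilon(\Omega)}\leq \norm{u}_{\xn^\varepsilon(\Omega)}$, weak lower semicontinuity from the convexity and strong continuity of $\hat f$ plus the weak continuity of the subtracted norm term via the compact embedding into $L^2_\varepsilon(\Omega)$, and the differential by a direct chain-rule computation (which the paper omits as ``a simple computation''). Your remark about the non-smooth locus is well taken and in fact sharper than the paper's statement: $u\mapsto\norm{(I-P_M)u}_{L^2_\varepsilon(\Omega)}$ fails to be G\^ateaux-differentiable at nonzero points of the span of $u_1,\dots,u_M$, so formula \eqref{gateaux:der:M+1} should be understood on the set where $(I-P_M)u\neq 0$; this does not affect the use of the lemma, since the minimizers relevant to the Auchmuty principle lie in the orthogonal complement of that span.
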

\begin{proof} 
Let $u \in \xn^\varepsilon(\Omega) \setminus \{0\}$.
Since 
\[
\norm{(I-P_M)u}_{L^2_\varepsilon(\Omega)} \leq \norm{u}_{L^2_\varepsilon(\Omega)} \leq \norm{u}_{\xn^\varepsilon(\Omega)},
\]
 then
$$\frac{f_{M+1}(u)}{\norm{u}_{\xn^\varepsilon(\Omega)}} \geq \frac{1}{2}\,\frac{T_\varepsilon[u,u]}{\norm{u}_{\xn^\varepsilon(\Omega)}} -1 \to +\infty \quad \text{as } \norm{u}_{\xn^\varepsilon(\Omega)} \to +\infty,$$
which shows that is  $f_{M+1}$ is coercive.

It is not difficult to see that the functional $\hat{f}= \frac{1}{2}\, T_\varepsilon$ is strictly convex, that is for any $u,v \in \xn^\varepsilon(\Omega), u \neq v$, we have that 
\begin{equation*}
\hat{f} \left(\frac{u+v}{2} \right) < \frac{\hat{f}(u) + \hat{f}(v)}{2}.
\end{equation*}
Moreover, since $\hat{f}$ is strongly continuous on $\xn^\varepsilon(\Omega)$ and strictly convex,
it is also weakly lower semi-continuous.
Additionally, the embedding $\iota_\varepsilon :\xn^\varepsilon(\Omega) \to L^2_\varepsilon(\Omega)$ is compact, therefore the $L^2_\varepsilon$-norm is sequentially weakly continuous on $\xn^\varepsilon(\Omega)$.
Since the orthogonal projection $P_M$ is also weakly continuous, 
we have that $f_{M+1}$ is  sequentially weakly lower semi-continuous.

Finally, the G\^ateaux derivative is just a simple computation and therefore we omit the details.
\end{proof}

In the sequel we shall  also need the following.
\begin{corol} \label{coroll:critpoint:M+1}
Let $\hat{u} \in \langle u_1, \dots, u_M \rangle_{L^2_\varepsilon(\Omega)}^\perp$ be in the  $L^2_\varepsilon(\Omega)$-orthogonal space  of the span of $\{u_1, \dots, u_M\}$. Suppose moreover that $\hat{u}$ is a non-zero critical point of $f_{M+1}$ on $\xn^\varepsilon(\Omega)$. Then $\hat{u}$ solves problem \eqref{prob:eigen2weak} with associated eigenvalue $\sigma[\varepsilon]= \norm{\hat{u}}_{L^2_\varepsilon(\Omega)}^{-1}-1$.
\end{corol}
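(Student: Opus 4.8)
The plan is to simply unwind the definition of critical point together with the explicit formula for the G\^ateaux differential from Lemma~\ref{lemma:weaklsc:M+1}. Since $\hat{u} \neq 0$, the functional $f_{M+1}$ is G\^ateaux-differentiable at $\hat{u}$, and being a critical point means $df_{M+1}(\hat{u};v) = 0$ for every $v \in \xn^\varepsilon(\Omega)$. Substituting into \eqref{gateaux:der:M+1}, I would obtain
\[
T_\varepsilon [\hat{u},v] = \norm{(I-P_M)\hat{u}}_{L^2_\varepsilon(\Omega)}^{-1} \langle (I-P_M)\hat{u}, (I-P_M)v\rangle_\varepsilon \qquad \forall v \in \xn^\varepsilon(\Omega).
\]

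Next I would exploit the hypothesis $\hat{u} \in \langle u_1, \dots, u_M\rangle_{L^2_\varepsilon(\Omega)}^\perp$, which gives $P_M \hat{u} = 0$, hence $(I-P_M)\hat{u} = \hat{u}$ and $\norm{(I-P_M)\hat{u}}_{L^2_\varepsilon(\Omega)} = \norm{\hat{u}}_{L^2_\varepsilon(\Omega)} \neq 0$, so the reciprocal above is well defined. Since $P_M$ is the $L^2_\varepsilon(\Omega)$-orthogonal projection onto $\langle u_1,\dots,u_M\rangle$, it is self-adjoint with respect to $\langle\cdot,\cdot\rangle_\varepsilon$, so $\langle \hat{u}, (I-P_M)v\rangle_\varepsilon = \langle (I-P_M)\hat{u}, v\rangle_\varepsilon = \langle \hat{u}, v\rangle_\varepsilon$. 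The displayed identity therefore collapses to
\[
T_\varepsilon [\hat{u},v] = \norm{\hat{u}}_{L^2_\varepsilon(\Omega)}^{-1} \langle \hat{u}, v\rangle_\varepsilon \qquad \forall v \in \xn^\varepsilon(\Omega),
\]
which is exactly the weak formulation \eqref{prob:eigen3weak} with $\sigma+1 = \norm{\hat{u}}_{L^2_\varepsilon(\Omega)}^{-1}$. Comparing the two, $\hat{u}$ solves \eqref{prob:eigen2weak} with eigenvalue $\sigma[\varepsilon] = \norm{\hat{u}}_{L^2_\varepsilon(\Omega)}^{-1} - 1$, as claimed.

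There is essentially no serious obstacle here; the only points requiring care are the hypothesis $\hat{u}\neq 0$ (needed both for G\^ateaux-differentiability of $f_{M+1}$ at $\hat{u}$, since $u \mapsto \norm{(I-P_M)u}_{L^2_\varepsilon(\Omega)}$ is not differentiable at the origin, and to ensure the reciprocal of $\norm{(I-P_M)\hat{u}}_{L^2_\varepsilon(\Omega)}$ makes sense) and the self-adjointness of the weighted orthogonal projection $P_M$, which is what lets me drop $(I-P_M)$ from the test function $v$. Once these observations are recorded, the conclusion is immediate from \eqref{prob:eigen3weak}.
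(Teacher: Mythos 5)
Your proposal is correct and follows essentially the same route as the paper: set the G\^ateaux differential from Lemma~\ref{lemma:weaklsc:M+1} to zero, use $P_M\hat{u}=0$ and $\langle \hat{u},P_M v\rangle_\varepsilon=0$ (the paper gets the latter directly from orthogonality of $\hat u$ to the $u_i$, you via self-adjointness of $P_M$ --- the same fact), and compare with \eqref{prob:eigen3weak}. No issues.
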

\begin{proof}
If $0 \neq \hat{u} \in \xn^\varepsilon(\Omega)$ is a critical point of $f_{M+1}$ then $df_{M+1}(\hat{u};v) =0$ for all $v \in \xn^\varepsilon(\Omega)$. By formula \eqref{gateaux:der:M+1} and the fact that $\hat{u}$ is $L^2_\varepsilon$-orthogonal to $u_i$ for all  $i=1,\dots,M$, which in turns implies that $P_M \hat{u}=0$ and $\langle\hat{u}, P_M v\rangle_\varepsilon=0$ for any $v \in \xn^\varepsilon(\Omega)$, we see that $\hat{u}$ solves problem \eqref{prob:eigen2weak} with  $\sigma[\varepsilon]= \norm{\hat{u}}^{-1}_{L^2_\varepsilon(\Omega)} -1$.
\end{proof}

We can finally prove the following theorem. The proof follows the lines of Auchmuty  \cite[Thm. 7.4]{auc} where the author proves the corresponding result in the case of an elliptic operator.
\begin{theorem}[Auchmuty Principle] \label{thm:min:f:M+1}
The functional $f_{M+1}$ defined in \eqref{def:fM+1} attains its minimum $-\frac{1}{2}(\sigma_{M+1}[\varepsilon] +1)^{-1}$ on $\xn^\varepsilon(\Omega)$. This minimum is attained at $\pm \hat u_{M+1}$ where $\hat u_{M+1}$ is a solution of \eqref{prob:eigen2weak} corresponding to the eigenvalue $\sigma_{M+1}[\varepsilon]$ and such that $\norm{\hat u_{M+1}}_{L^2_\varepsilon(\Omega)} = (\sigma_{M+1}[\varepsilon] +1)^{-1}$.
\end{theorem}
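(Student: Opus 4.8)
The plan is to run the direct method of the calculus of variations and then identify the minimiser from the first-order optimality conditions supplied by Lemma~\ref{lemma:weaklsc:M+1} and Corollary~\ref{coroll:critpoint:M+1}. First I would prove existence of a minimiser: by Lemma~\ref{lemma:weaklsc:M+1} the functional $f_{M+1}$ is coercive and sequentially weakly lower semi-continuous on the Hilbert space $\xn^\varepsilon(\Omega)$, so a minimising sequence is bounded, hence admits a weakly convergent subsequence whose weak limit $u^\ast\in\xn^\varepsilon(\Omega)$ satisfies $f_{M+1}(u^\ast)=\inf_{\xn^\varepsilon(\Omega)}f_{M+1}$.

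Next I would pin down the value of this infimum by matching an upper and a lower bound. For the upper bound, pick (via the spectral theorem) an $L^2_\varepsilon(\Omega)$-normalised eigenfunction $\bar u$ of \eqref{prob:eigen2weak} associated with $\sigma_{M+1}[\varepsilon]$ and $L^2_\varepsilon(\Omega)$-orthogonal to $u_1,\dots,u_M$; then $(I-P_M)\bar u=\bar u$, so \eqref{eq:auchm:luzzini} gives $f_{M+1}(t\bar u)=\tfrac{t^2}{2}(\sigma_{M+1}[\varepsilon]+1)-t$ for $t>0$, whose minimum over $t>0$ equals $-\tfrac12(\sigma_{M+1}[\varepsilon]+1)^{-1}$; hence $\inf f_{M+1}\le -\tfrac12(\sigma_{M+1}[\varepsilon]+1)^{-1}<0$. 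For the lower bound, write any $u\in\xn^\varepsilon(\Omega)$ as $u=P_Mu+(I-P_M)u$; since each $u_i$ is an eigenfunction, $T_\varepsilon[u_i,(I-P_M)u]=(\sigma_i[\varepsilon]+1)\langle u_i,(I-P_M)u\rangle_\varepsilon=0$, so $T_\varepsilon[u,u]=T_\varepsilon[P_Mu,P_Mu]+T_\varepsilon[(I-P_M)u,(I-P_M)u]\ge T_\varepsilon[(I-P_M)u,(I-P_M)u]$, and the orthogonal-complement form of the variational characterisation \eqref{minmax:formula} gives $T_\varepsilon[(I-P_M)u,(I-P_M)u]\ge(\sigma_{M+1}[\varepsilon]+1)\,\|(I-P_M)u\|_{L^2_\varepsilon(\Omega)}^2$. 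Setting $r=\|(I-P_M)u\|_{L^2_\varepsilon(\Omega)}$ we get $f_{M+1}(u)\ge \tfrac12(\sigma_{M+1}[\varepsilon]+1)r^2-r\ge -\tfrac12(\sigma_{M+1}[\varepsilon]+1)^{-1}$. Therefore $\inf f_{M+1}=-\tfrac12(\sigma_{M+1}[\varepsilon]+1)^{-1}$, attained at $u^\ast$.

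It then remains to show $u^\ast$ has the stated form. Since $f_{M+1}(0)=0>\inf f_{M+1}$ we have $u^\ast\neq 0$; and if $(I-P_M)u^\ast=0$ then $f_{M+1}(u^\ast)=\hat{f}(u^\ast)\ge 0$, contradicting $f_{M+1}(u^\ast)<0$, so $(I-P_M)u^\ast\neq 0$ and $f_{M+1}$ is G\^ateaux-differentiable at $u^\ast$. Being a minimiser, $df_{M+1}(u^\ast;v)=0$ for all $v\in\xn^\varepsilon(\Omega)$; testing \eqref{gateaux:der:M+1} with $v=u_i$ for $i=1,\dots,M$, using $(I-P_M)u_i=0$, $T_\varepsilon[u^\ast,u_i]=(\sigma_i[\varepsilon]+1)\langle u^\ast,u_i\rangle_\varepsilon$ and $\sigma_i[\varepsilon]+1>0$, yields $\langle u^\ast,u_i\rangle_\varepsilon=0$, i.e. $u^\ast\in\langle u_1,\dots,u_M\rangle_{L^2_\varepsilon(\Omega)}^\perp$. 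By Corollary~\ref{coroll:critpoint:M+1}, $u^\ast$ solves \eqref{prob:eigen2weak} with eigenvalue $\sigma=\|u^\ast\|_{L^2_\varepsilon(\Omega)}^{-1}-1$, and then \eqref{eq:auchm:luzzini} with $P_Mu^\ast=0$ gives $f_{M+1}(u^\ast)=-\tfrac12(\sigma+1)^{-1}$. Comparing with $\inf f_{M+1}=-\tfrac12(\sigma_{M+1}[\varepsilon]+1)^{-1}$ forces $\sigma=\sigma_{M+1}[\varepsilon]$ and $\|u^\ast\|_{L^2_\varepsilon(\Omega)}=(\sigma_{M+1}[\varepsilon]+1)^{-1}$. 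Finally, since $f_{M+1}$ depends on $u$ only through $T_\varepsilon[u,u]$ and $\|(I-P_M)u\|_{L^2_\varepsilon(\Omega)}$ it is even, so $-u^\ast$ is also a minimiser; setting $\hat u_{M+1}:=u^\ast$ completes the proof.

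The main obstacle is the identification step, in two linked parts: first, that the critical point produced by the minimisation is automatically $L^2_\varepsilon(\Omega)$-orthogonal to $u_1,\dots,u_M$ — which is exactly what testing the Euler--Lagrange equation against the $u_i$ delivers, and which is what lets Corollary~\ref{coroll:critpoint:M+1} be invoked so that the minimiser is an honest eigenfunction; and second, that the associated eigenvalue is precisely $\sigma_{M+1}[\varepsilon]$ rather than some larger eigenvalue, which is forced only after matching the attained value $-\tfrac12(\sigma+1)^{-1}$ with the independently computed infimum. A minor technical point to settle before differentiating is the non-degeneracy $(I-P_M)u^\ast\neq 0$, which is why the (strict) sign of the infimum is established first.
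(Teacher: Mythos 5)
Your proof is correct and follows the same skeleton as the paper's: direct method for existence, the orthogonal splitting $u=P_Mu+(I-P_M)u$ with $T_\varepsilon[u,u]=T_\varepsilon[P_Mu,P_Mu]+T_\varepsilon[(I-P_M)u,(I-P_M)u]$, and Corollary~\ref{coroll:critpoint:M+1} plus \eqref{eq:auchm:luzzini} to identify the minimiser as an eigenfunction. Where you genuinely diverge is in how the value of the infimum is pinned down. The paper reduces the minimisation to $\langle u_1,\dots,u_M\rangle_{L^2_\varepsilon(\Omega)}^\perp$ via $f_{M+1}(u)\ge f_{M+1}((I-P_M)u)$, observes that any non-zero critical point there has critical value $-\tfrac12(\sigma+1)^{-1}$ for its eigenvalue $\sigma$, and concludes that the minimum corresponds to the smallest admissible eigenvalue $\sigma_{M+1}[\varepsilon]$; the lower bound $f_{M+1}\ge-\tfrac12(\sigma_{M+1}[\varepsilon]+1)^{-1}$ is never written out explicitly. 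You instead compute the infimum independently by matching an explicit upper bound (optimising $f_{M+1}(t\bar u)$ over $t>0$ for a normalised $(M+1)$-st eigenfunction) against a lower bound obtained from the spectral-gap inequality $T_\varepsilon[w,w]\ge(\sigma_{M+1}[\varepsilon]+1)\|w\|^2_{L^2_\varepsilon(\Omega)}$ on the orthogonal complement, and only then identify the minimiser by comparing critical values. This buys a fully quantitative a priori bound on $f_{M+1}$ and removes any ambiguity about why the relevant eigenvalue is $\sigma_{M+1}[\varepsilon]$ and not a larger one; the price is the extra appeal to the orthogonal-complement form of \eqref{minmax:formula} (equivalently, the eigenfunction expansion of $T_\varepsilon$ on $\langle u_1,\dots,u_M\rangle_{L^2_\varepsilon(\Omega)}^\perp$), which the paper does not need. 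Your explicit verification, by testing the Euler--Lagrange equation \eqref{gateaux:der:M+1} against each $u_i$, that the minimiser is orthogonal to $u_1,\dots,u_M$ is also a welcome tightening of the paper's shorter argument.
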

\begin{proof}
Since $\xn^\varepsilon(\Omega)$ is reflexive and $f_{M+1}$ is sequentially weakly lower semi-continuous and coercive  by Lemma \ref{lemma:weaklsc:M+1}, we have that $f_{M+1}$ attains a finite minimum on $\xn^\varepsilon(\Omega)$.
Observe that $f_{M+1}(0)=0$.
We complete the orthonormal set of eigenfunctions $u_1, \dots, u_M$ to a $L^2_\varepsilon$-orthonormal basis $\{u_i\}_{i \in \mathbb{N}} \subset \xn^\varepsilon(\Omega)$.
Now, for any $u \in \xn^\varepsilon(\Omega)$ write $u = P_M u + w$, where 
\[
P_M u = \sum_{i=1}^M \langle u,u_i\rangle_\varepsilon\,  u_i\,\, , \qquad \quad w=(I-P_M)u= \sum_{i=M+1}^{\infty} \langle u,u_i\rangle_\varepsilon\, u_i\,.
\]
Observe that $w \in \langle u_1, \dots, u_M \rangle_{L^2_\varepsilon(\Omega)}^\perp$, thus the vector fields $P_M u$ and $w$ are orthogonal in $L^2_\varepsilon(\Omega)$, that is  $\langle P_M u, w\rangle_\varepsilon=0$. 
Moreover, observe that for all $i=1, \dots, M$ one has
\begin{equation} \label{u_i:eigenfunction}
\int_\Omega \operatorname{curl}u_i \cdot \operatorname{curl}v \, dx + \int_\Omega \operatorname{div}(\varepsilon u_i) \operatorname{div}(\varepsilon v) \, dx = \sigma_i[\varepsilon] \int_{\Omega} \varepsilon u_i \cdot v\,dx \qquad 
\forall \,v \in \xn^\varepsilon(\Omega).
\end{equation}
Since   $\langle u_i, w\rangle_\varepsilon=0$ for all $i=1, \dots, M$  then, setting $v=w$ in \eqref{u_i:eigenfunction}, we see that 
\[
\int_\Omega \operatorname{curl}u_i \cdot \operatorname{curl}w \, dx + \int_\Omega \operatorname{div}(\varepsilon u_i) \operatorname{div}(\varepsilon w) \, dx =0.
\]
 Hence 
\begin{equation*}
\int_\Omega \operatorname{curl}P_M u \cdot \operatorname{curl}w \, dx + \int_\Omega \operatorname{div}(\varepsilon P_M u) \operatorname{div}(\varepsilon w) \, dx =0.
\end{equation*}
Therefore $T_\varepsilon [u][u] = T_\varepsilon [P_M u,P_M u] + T_\varepsilon [w,w]$ and
\begin{equation*}
f_{M+1}(u) = \frac{1}{2} T_\varepsilon[u][u] - \norm{w}_{L^2_\varepsilon(\Omega)} \geq \frac{1}{2} T_\varepsilon [w,w] -\norm{w}_{L^2_\varepsilon(\Omega)} = f_{M+1}(w).
\end{equation*}
Thus the minimum of $f_{M+1}$ occurs in the orthogonal space $\langle u_1, \dots, u_M \rangle_{L^2_\varepsilon(\Omega)}^\perp$. If $\hat{u} \in \langle u_1, \dots, u_M \rangle_{L^2_\varepsilon(\Omega)}^\perp$ is a non-zero critical point of $f_{M+1}$, by Corollary \ref{coroll:critpoint:M+1} one has that $\hat{u}$ is an eigenfunction of problem \eqref{prob:eigen2weak} associated with the eigenvalue $\sigma[\varepsilon]= \norm{\hat{u}}^{-1}_{L^2_\varepsilon(\Omega)} -1$. Hence, observing that $P_M \hat{u}=0$ and using \eqref{eq:auchm:luzzini} we get that
\begin{equation*}
f_{M+1}(\hat{u}) = -\frac{1}{2} \norm{\hat{u}}_{L^2_\varepsilon(\Omega)} = -\frac{1}{2} (\sigma[\varepsilon] +1)^{-1} <0.
\end{equation*}
In particular $f_{M+1}$ is minimized when $\sigma[\varepsilon]$ is the smallest eigenvalue of problem \eqref{prob:eigen2weak} corresponding to an eigenfunction $\hat{u}$ which is orthogonal to $u_1, \dots, u_M$, that is when $\sigma[\varepsilon]= \sigma_{M+1}[\varepsilon]$  and such a function $\hat u$  can be chosen as the function $\hat{u}_{M+1}$  required in the statement. 
\end{proof}

\section*{Acknowledgments}

  The authors are very thankful to  Prof. A. Lakhtakia and to Prof. Ioannis G. Stratis for useful discussions  during the preparation of the paper, in particular with respect to the physical motivation of the problem as presented in the introduction.
P.D.L and P.L. are members of the ``Gruppo Nazionale per l’Analisi Matematica, la Probabilit\`a e le loro Applicazioni'' (GNAMPA) of the ``Istituto Nazionale di Alta Matematica'' (INdAM) and acknowledge the support 
  of the ``INdAM GNAMPA Project'' codice CUP\_E53C22001930001 ``Operatori differenziali e integrali in geometria spettrale''.  M.Z.   acknowledges  financial support by INdAM   through their program ``Borse di studio per l'estero''. P.D.L. and P.L. also acknowledge the support  from the project ``Perturbation problems and asymptotics for elliptic differential equations: variational and potential theoretic methods'' funded by the MUR Progetti di Ricerca di Rilevante Interesse Nazionale (PRIN) Bando 2022 grant 2022SENJZ3.

\end{document}